\documentclass[reqno]{amsart}
\usepackage{amsthm}
\usepackage{bm}
\usepackage[mathscr]{eucal}
\usepackage{mathrsfs}
\usepackage{mathbbol}
\usepackage{oldgerm,units,color}
\usepackage{hyperref}


%
%
%
%
%





\newtheorem{theorem}{Theorem}[section]
\newtheorem{proposition}[theorem]{Proposition}
\newtheorem{definition}[theorem]{Definition}

\newtheorem{corollary}[theorem]{Corollary}

\newtheorem{example}[theorem]{Example}
\newtheorem{remark}[theorem]{Remark}

\newtheorem{note}[theorem]{Note}


\newcommand{\Real}{\mathbb R}

\newcommand{\Net}{\mathbb N}


\newcommand{\one}{\mathbb{1}}
\newcommand{\zero}{\mathbb{0}}








\newcommand{\trop}[1]{\mathcal{#1}}

\newcommand{\tT}{\trop{T}}







\newcommand{\Hom}{Hom}




\newcommand{\al}{\alpha}















\hfuzz5pt 
\vfuzz5pt 

\pagestyle{empty}
    \ifx\proof\undefined
    \newenvironment{proof}{
    \smallskip
    \noindent\emph{Proof.}}{\hfill\(\Box\)
    \bigskip
    } \fi









\newcommand{\ifdef}[3]{\ifthenelse{\equal{#1}{true}}{#2}{#3}}







\pagenumbering{arabic} \pagestyle{plain}

\def\semiring0{semiring$^\dagger$}
\def\semiringg0{semiring$^{\dagger\dagger}$}
\def\semifield0{semifield$^{\dagger}$}
\def\wV{\bigwedge V_n}
\def\lra{\longrightarrow}

\def\wrV{\bigwedge^rV_n}

\def\zparen{\{z\}}
\def\Cong{\Phi}
\def\bw{\bigwedge}

\def\Gdbl{T(V)_{\operatorname{doub}}\def\ovD{\overline{D}}}

\def\End{{\operatorname{End}}}

\def\semiring0{semiring$^\dagger$}
\def\semirings0{semirings$^\dagger$}
\def\semifield0{semifield$^{\dagger}$}
\def\semidomain0{semidomain$^{\dagger}$}

\def\Dz{D\{z\}}
\def\ovDz{\overline{D}\{z\}}
\def\Dfz{D^f\{z\}}
\def\wM{\bigwedge V}
\def\blamb{{\bm\lambda}}
\def\w{\wedge}

\def\sra{\rightarrow}

\def\ovD{\overline{D}}
\def\wb{[{\mathbf b}]}

\def\bfb{{\mathbf b}}
\def\be{\begin{equation}}
\def\ee{\end{equation}}
\def\Acal{\mathcal A}

\newcommand{\etype}[1]{\renewcommand{\labelenumi}{(#1{enumi})}}
\def\eroman{\etype{\roman}}

\def\Null{\operatorname{Null}}

\newtheorem*{nothma}{\textbf{Theorem A}}
\newtheorem*{nothmb}{\textbf{Theorem B}}
\newtheorem*{nothmc}{\textbf{Theorem C}}
\newtheorem*{nothmd}{\textbf{Theorem D}}
\newtheorem*{nothme}{\textbf{Theorem E}}

\def\tTz{\mathcal T_\zero}

\def\({\left(}
\def\){\right)}



\textwidth 160mm \textheight 215mm \topmargin 0mm \evensidemargin
2mm \oddsidemargin 2mm

\def\a{\alpha}

\def\one{\mathbb{1}}
\def\zero{\mathbb{0}}

\def\ctw{\cdot_{\operatorname{tw}}}

\def\rb{b}
\def\bfb{\textbf{\rb}}

\def\supp{\operatorname{supp}}


\def\Q{\mathbb Q}

\def\Z{\mathbb Z}

\newtheorem{thm}[theorem]{Theorem}

\newtheorem*{thm*}{Theorem}
\newtheorem*{dig*}{Digression}
\newtheorem{lem}[theorem]{Lemma}
\newtheorem{rem}[theorem]{Remark}
\newtheorem{prop*}{Proposition}

\newtheorem{prop}[theorem]{Proposition}
\newtheorem{defn}[theorem]{Definition}
\newtheorem{cexample}[theorem]{Counterexample}
\newtheorem*{examp*}{Example}
\newtheorem*{examples*}{Examples}
\newtheorem*{remark*}{Remark}
\newtheorem*{defn*}{Definition}
\newtheorem*{note*}{Note}

\begin{document}
\title[Grassman semialgebras and the Cayley-Hamilton theorem] {Grassman semialgebras and the Cayley-Hamilton theorem}

\author{Letterio Gatto}
\address{Dipartimento di Scienze Matematiche, Politecnico di Torino, C. so Duca degli Abruzzi 24, 10129 Torino - Italia }
\email{letterio.gatto@polito.it}
\author{Louis Rowen}
\address{Department of Mathematics, Bar-Ilan University, Ramat-Gan 52900,
Israel} \email{rowen@math.biu.ac.il}

\thanks{The first author was partially supported by INDAM-GNSAGA and by PRIN "Geometria sulle variet\`{a} algebriche" \, Progetto di Eccellenza\, Dipartimento\, di\, Scienze\, Matematiche, 2018--2022 no.
E11G18000350001. The second author is supported in part by the
Israel Science Foundation, grant  No. 1207/12 and his visit to
Torino was supported by the ``Finanziamento Diffuso della Ricerca'',
grant no. 53$\_$RBA17GATLET, of Politecnico di Torino}

\subjclass[2010]{Primary  15A75,   16Y60,  15A18; Secondary 12K10,
14T10 }



\keywords{Cayley-Hamilton theorem,   exterior semialgebras,
Grassmann semialgebras, Hasse-Schmidt derivations, differentials,
eigenvalues, eigenvectors Laurent series, Newton's formulas,
 power series, semifields,  systems,
semialgebras, Tropical algebra, triples.}


\begin{abstract}
We develop a theory of Grassmann semialgebra triples using
Hasse-Schmidt derivations, which formally generalizes results such
as the Cayley-Hamilton theorem in linear algebra, thereby providing
a unified approach to classical linear algebra and tropical algebra.
\end{abstract}

\maketitle


\section{Introduction}
\numberwithin{equation}{section}

The main goal of this paper is to define and explore the semiring
version of the theory \cite{Ga2,GaSc,GaSc2} of the first author
concerning the Grassmann {exterior} algebra, viewed more generally
in terms of negation maps and systems, continuing the approach of
\cite{Row16}. This provides a robust structure which unifies and
generalizes four major versions of Grassmann structures, cf.~ Note
~\ref{unify}, and provides a generalization of  the Cayley-Hamilton
theorem in Theorem~\ref{CHthm}. In the process we investigate
Hasse-Schmidt derivations on Grassmann systems. The version given in
Theorem~\ref{ge22} (over a free module $V$ over an arbitrary
semifield) is the construction which seems to ``work.''

Generalizing negation in Definition~\ref{negmap} to the notion of a
``negation map'' $(-)$ satisfying all the usual properties of
negation except $a(-)a = 0$ (studied in \cite{Row16}), we find that
the Grassmann semialgebra of a free module $V$, described in
Theorem~\ref{ge22}, has a natural negation map  on all homogeneous
vectors,  with the ironic exception of $V$ itself, obtained by
switching two tensor components.

\begin{nothma}[Theorem~\ref{ge22}]
 $ \bigoplus _{n\ge 2} V^{\otimes (n)}$ has a
negation map $(-)$ satisfying $$ b_{\pi(i_1)}\cdots \bar
b_{\pi(i_t)}=(-)^{\pi}  b_{i_1}\cdots \bar  b_{i_t},$$ for
$b_{i_j}\in V.$ 
\end{nothma}

This provides us ``enough'' negation, coupled with the relation
$\preceq_\circ$ of Definition~\ref{precmain0} (designed to replace
equality), to adapt the methods of \cite{GaSc2} to negation maps and
systems of \cite{Row16} to study $T(V)$, with $\tT$   the nonzero
simple
 tensors.

 One obtains a Grassmann algebra by modding out all elements of the form $v \otimes v$, $v\in
 V$. A weaker version   is obtained when
  we take a given base $\{b_0, b_1,
\dots, b_{n-1}\}$ of $V$ and mod only   the $b_i \otimes b_i, \ 0\le
i <n$. One can enhance these by means of a ``symmetrizing''
construction which provides a negation map and a system for any
Grassmann algebra:

\begin{nothmb} [Theorems~\ref{gr1}, \ref{gr2}, \ref{gr3}] There is a symmetrized triple
$(\mathfrak G(V)^\natural,\widehat {\tT},(-)_{\operatorname{sw}})$,
 together with an embedding of triples    $ (\mathfrak G(V)_{\ge
2},\tT_{\mathfrak G(V)_{\ge 2}},(-)) \to (\mathfrak
G(V)^\natural,\widehat {\tT},(-)_{\operatorname{sw}})$  given by  $c
\mapsto (c,0)$. An analogous assertion holds for $\mathfrak
G(V)^\diamondsuit.$
\end{nothmb}

\begin{note}\label{unify}
There are four major applications of this Grassmann triple:

  \begin{enumerate}\eroman
\item The classical situation is recovered, where $V$ is a vector space and $(-)$
is the classical negation.
\smallskip
\item  The tropical situation is treated in \cite{GG} to study matroids, taken over the max-plus algebra, cf.~Remark~\ref{zerosum}
where $(-)$ is the identity map.
\smallskip
\item  The supertropical situation, as described in   Theorem~\ref{gr1},  provides an alternate language
for the theory of \cite{GG}.
\smallskip
\item  The symmetrized situation, as described   in \S\ref{symm},  provides a nontrivial negation map    for any Grassmann semialgebra.
\end{enumerate}
\end{note}

 The rest of \S\ref{grT1} is dedicated to laying the groundwork
for further research in Grassmann semialgebras, and the reader only
interested in the applications of this paper could go on to
\S\ref{basics}, where we study properties of derivations. There we
learn how to associate to an endomorphism $f$ of  an $\Acal$ free
module $V_n$  a Hasse-Schmidt derivation $\Dz:\bw V_n\sra \bw V_n
[[z]]$,  on the  Grassmann semi-algebra, i.e $\Dz (u\w v)=\Dz u\w
\Dz V$  and $\Dz_{|_{V_n}}=\sum_{j\geq 0}f_jz^j$. We  suitably
construct, starting from the data of $\Dz$, an operator valued
polynomial $\ovDz:\bw V_n\sra \End(\bw V_n)[z]$ showing in detail:

\begin{nothmc}[Theorem~\ref{prech1}] The  polynomial $\ovDz$ is a quasi-inverse of $\Dz$, in the sense that $ \ovDz  \Dz u\succeq u  $ for all $u\in \bw V_n$. \end{nothmc}

 Our main result,
Theorem~\ref{prech},
 describes the relation between a Hasse-Schmidt
derivation $\Dz$ and its ``quasi-inverse'' $\overline {D} \{z\}$
defined in such a way to yield:

\begin{nothmd}[Theorem~\ref{prech}]
 $\overline {D} \{z\} (\Dz  u\w v)\succeq u\w \overline {D} \{z\}
v.$
 \end{nothmd}

In the classical case, one gets equality as shown in
Remark~\ref{eq:CHform17}, so we recover \cite{GaSc2}.  
 Our main application in this paper is a
generalization of the Cayley-Hamilton theorem to semi-algebras.

\begin{nothme}[Theorem~\ref{CHthm}]
 \be
\left((D_nu+e_1D_{n-1}u+\cdots+e_{n}u)\w v\right)
(-)\left((D_nu+e'_1D_{n-1}u+\cdots+e'_{n}u)\w v \right) \succeq 0
 \ee for all $u\in \bw^{>0}V_n$.
 \end{nothme}

We obtain true equality in these theorems  by modding out all
elements of the form $v \otimes v$. Again we recover \cite{GaSc2} in
the classical
 case.

\subsection{Basic notions}\label{bn}$ $

Much of this section is a review of \cite{Row16}, as summarized in
\cite{Row17}, and also as in \cite{JMR}.   As customary,
$\Net$~denotes the natural numbers including 0, $\Net^+$ denotes
$\Net \setminus \{0\},$ $\Q$ denotes the rational numbers, and
$\Real$ denotes the real numbers, all ordered monoids under
addition.

A \textbf{\semiringg0}  $(\mathcal A, +, \cdot)$ is an additive
abelian semigroup $(\mathcal A, +)$ and multiplicative semigroup
$(\mathcal A, \cdot)$ satisfying the usual distributive laws. A
\textbf{\semiring0}  $(\mathcal A, +, \cdot, \one)$ is a \semiringg0
with a multiplicative unit~$\one$. (Thus, an ideal of a \semiring0
is a \semiringg0.) A  \textbf{\semifield0}  is a \semiring0 whose
multiplicative  monoid is  a group. A \textbf{semiring} (resp.~
\textbf{semifield}) \cite{golan92} is a \semiring0 (resp.~
 \semifield0) with an absorbing element $\zero$  formally adjoined.

\begin{defn}\label{modu2}   A $\tT$-\textbf{module} over a set $ \tT$
 is an additive monoid $( \mathcal A,+,\zero)$ with a scalar
multiplication $\tT\times \mathcal A \to \mathcal A$ satisfying the
following axioms, $\forall k \in \Net,$ $a \in \tT,$ $b,b_j \in
\mathcal A$:

 \begin{enumerate}\label{distr32}\eroman
   \item (Distributivity over $\tT$): $a (\sum _{j=1}^k b_j) =     \sum _{j=1}^k (a b_j). $
\item  $a\zero_{\mathcal A }= \zero_{\mathcal A }$.
\end{enumerate}


A $\tT$-\textbf{monoid module}   over a multiplicative monoid $\tT$
is a $\tT$-module satisfying the extra conditions $$\one_\tT b = b,
\qquad (a_1a_2) b = a_1(a_2b), \quad \forall a_i \in \tT, \ b \in
\mathcal A.$$
\end{defn}

A $\tT$-\textbf{semiring} is a semiring that is also a $\tT$-monoid
module over a given multiplicative submonoid~$\tT$. This paper only
concerns   $\tT$-semirings, which are closely related to blueprints
in \cite{Lor1}.  We put $\tTz = \tT \cup \{ \zero\}.$

 Tensor products over
semirings \cite{Ka1,Ka2,Tak} are analogous to tensor products over
rings. The tensor product $\mathcal M_1 \otimes _{\mathcal A}
\mathcal M_2$ of right and left $\mathcal A$-modules $\mathcal M_1$
and $\mathcal M_2$ is $(\mathcal F_1 \oplus \mathcal F_2)/\Cong,$
where $\mathcal F_i$ is the free module (respectively right or left)
with base~$\mathcal M_i$, and $\Cong$ is the congruence generated by
all
 \begin{equation}\label{defcong}\bigg(\big(\sum_j x_{1,j}, \sum_k x_{2,k}\big), \sum_{j,k}\big( x_{1,j},  x_{2,k}\big)\bigg) ,\quad \bigg((  x_1 a, x_2 ), (x_1,a
 x_2
)\bigg)\quad  \forall x_i, x_{i,j},x_{i,k}\in \mathcal M_i,  \ a \in
\mathcal A.
\end{equation}

\subsection{Negation maps, triples, and systems}

\begin{defn}\label{negmap}
 A \textbf{negation map} on  a $\tT$-module $M$ over a
 given set $\tT$
is a
 semigroup isomorphism
$(-) :M \to M$ of order~$\le 2,$  written $b\mapsto (-)b$, which
also
 respects the $\tT$-action in the sense that
$$(-)(ab) = a((-)b)$$ for $a \in \tT,$ $b \in M.$

 A \textbf{\semiringg0 negation map} on  a \semiringg0 $\mathcal A$
is a negation map which   satisfies  $(-)(ab) = a((-)b) = ((-)a)b $
for all $a ,b \in \mathcal A.$
 \end{defn}
%
In the classical case the negative is a negation map.  For tropical
algebra, one could just take $(-)$ to be the
identity map, but we want a less trivial example. 

We write $ a (-)b $ for $a+ ((-)b)$, $(\pm )a$ for $\{ a, (-)a\},$
and $ a ^\circ$ for $ a (-)a$, called a \textbf{quasi-zero}. The
set~$M ^\circ$ of quasi-zeroes is an important $\tT$-submodule of $M
$. When $\mathcal A$ is a semiring, $\mathcal
 A^\circ$ is  an ideal.

We define $(-)^\zero a$  to be $a$ and, for $k \in \Net$, we
inductively define $(-)^k a$ to be $(-)((-)^{k-1}a).$

\begin{lem}\label{repneg}  $((-)^k a )((-)^{k'} a') = (-)^{k+k'} (aa')$  for $a,a' \in \mathcal
A.$
 \end{lem}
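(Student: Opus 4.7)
The plan is to prove \lemref{repneg} by a short induction on $k+k'$, using that the negation map distributes across products on either side.

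The base case $k=k'=0$ is immediate, since by definition $(-)^0$ is the identity and both sides reduce to $aa'$. For the inductive step, suppose $k'\ge 1$ and write $(-)^{k'}a' = (-)((-)^{k'-1}a')$. The \semiringg0 negation-map axiom $(-)(xy) = x((-)y)$ applied to the right factor yields
\[
((-)^k a)\bigl((-)^{k'} a'\bigr) \;=\; ((-)^k a)\bigl((-)((-)^{k'-1} a')\bigr) \;=\; (-)\bigl(((-)^k a)((-)^{k'-1}a')\bigr).
\]
Applying the induction hypothesis to the pair $(k,k'-1)$ rewrites the inner product as $(-)^{k+k'-1}(aa')$, and one more application of $(-)$ gives $(-)^{k+k'}(aa')$, as desired. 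The case $k\ge 1$ is handled symmetrically via the companion pull-through identity $((-)x)y = (-)(xy)$.

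The only point that requires a short separate argument is the left-sided pull-through $((-)x)y = (-)(xy)$. I would derive this from the stated axiom together with the involutive nature of $(-)$: applying the axiom with $y$ replaced by $(-)y$ gives $(-)(x((-)y)) = x(-)^2 y = xy$, hence $x((-)y) = (-)(xy)$; combining this with $(-)^2 = \mathrm{id}$ and one further application of the axiom yields the left-sided form.

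I do not anticipate any real obstacle. The lemma is essentially a bookkeeping statement: negations slide freely across a product, and only the total parity $k+k'$ survives. The inductive mechanics are formal, and once the two pull-through rules are in place the entire argument reduces to repeated applications of the axiom.
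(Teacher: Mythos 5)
Your overall plan matches the paper's: both proofs peel off one $(-)$ at a time by induction (the paper inducts on $k$ alone, you on $k+k'$, but the mechanics are the same). However, your derivation of the left-sided pull-through $((-)x)y=(-)(xy)$ does not close. The chain you write, $(-)(x((-)y))=x((-)^2 y)=xy$, hence $x((-)y)=(-)(xy)$, merely re-derives the given right-sided axiom $(-)(ab)=a((-)b)$: at no stage does the negation ever land on the left factor. And ``one further application of the axiom'' cannot produce the left-sided form, because the axiom as stated only ever shuttles $(-)$ between the second factor and the outside of the product; $((-)x)y$ is simply out of its reach. You would need either commutativity of multiplication (then $((-)x)y=y((-)x)=(-)(yx)=(-)(xy)$) or a genuinely two-sided version of the negation-map axiom. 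It is worth noting that the paper's one-line computation uses the left pull-through tacitly as well, when it rewrites $((-)((-)^{k-1}a))\,((-)^{k'}a')$ as $(-)\bigl(((-)^{k-1}a)((-)^{k'}a')\bigr)$, so you correctly sensed that something needed justifying at this step; but the argument you supplied is circular and does not actually fill the gap.
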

\begin{proof}
$((-)^k a )((-)^{k'} a') = (-)((-)^{k-1}a)((-)^{k'} a')  =
(-)((-)^{k+k'-1}(aa')) = (-)^{k+k'} (aa'),  $ by induction on $k$.
\end{proof}


\begin{defn}\label{modu21}
A \textbf{pseudo-triple} is a collection $(\mathcal A, \tT, (-)),$
where  $(-)$ is a negation map on both $\tT$ and $\mathcal A $, and
$\mathcal A$ is a $(\tT,(-))$-module.

 A $\tT_{\mathcal A}$-\textbf{pseudo-triple} $(\mathcal A, \tT_{\mathcal
A}, (-))$ is a $\tT$-module $\mathcal A$, with $\tT_{\mathcal A}$
designated as a distinguished subset, and a negation map $(-)$
satisfying $(-)\tT_{\mathcal A} = \tT_{\mathcal A}.$
 A
$\tT_{\mathcal A}$-\textbf{triple}, called a \textbf{triple} when
$\tT$ is understood, is a $\tT_{\mathcal A}$-pseudo-triple, in which
$\tT_{\mathcal A} \cap \mathcal A^ \circ = \emptyset$ and
$\tT_{\mathcal A}$ generates $(\mathcal A \setminus \{\zero\},+).$

Since $\tT_{\mathcal A}$ is usually clear from the context, we abuse
notation and write $\tT$ for $\tT_{\mathcal A}\subseteq \mathcal A$.
A triple is \textbf{uniquely negated} when for any $a \in \tT$, $a+b
\in \mathcal A^\circ$ implies  $b = (-)a.$
\end{defn}

%
%
%

The structure is rounded out with the following relation.

\begin{defn}\label{precmain0} Define the $\circ $-\textbf{surpassing relation} $\preceq_\circ $ on a
module $M$ with negation map by $a_0 \preceq_\circ a_1$ if $a_1 =
a_0 + d$ for some $d\in M ^\circ$.\end{defn}

A uniquely negated triple $(\mathcal A, \tT, (-))$ together with a
surpassing relation $\preceq$ is called a
\textbf{system}\footnote{In  \cite{JuR1}, in the systemic setting, a
more general notion of surpassing map $\preceq $ is used, and
$\mathcal A _{\Null}$
 is introduced  which equals  $\mathcal A ^\circ$ when $\preceq =\preceq
 _\circ;$ here, using Lemma~\ref{biG2} as justification, we use  $\mathcal A ^\circ$ and $\preceq =\preceq
 _\circ$ to avoid complications.}.

\begin{rem}\label{trans}The relation $\preceq_\circ$ on a system restricts to   equality on $\tT$, by
\cite[Proposition~4.4]{Row16}. In fact
 $\preceq_\circ$ is used to replace equality when we work with triples,
 and identities in classical algebra can often be replaced by
 relations expressed in terms of  $\preceq_\circ$,
 by means of the transfer principle of \cite{AGG2}, formulated for systems in~\cite[Theorem~6.17]{Row16}.
\end{rem}

\subsection{Functions to $\mathcal A$}$ $

The next construction, discussed in \cite[\S 4.2]{JuR1}, enables us
to describe power series in a structural context. From now on, we
suppose $(S,+)$ is a semigroup, often   $(\mathbb N,+,0)$. Given a
triple $(\mathcal A,\tT,(-))$, $\mathcal A^S$~denotes the maps from
$S$ to $\mathcal A$, and $\mathcal \tT^S$ denotes the nonzero maps
of $\mathcal A^S$ sending $S$ to $\tT$. For example, for $c \in
\mathcal A,$ the \textbf{constant function} $\tilde c $ is given by
$\tilde c (s) = c$ for all $s \in S.$

We modify the definition of support from
\cite[Definition~4.2]{JuR1}.

\begin{defn}
Given $f\in  \mathcal A ^S$ we  define its \textbf{support} $\supp
(f):=\{ s \in S: f(s ) \ne \zero \},$ 
and $\supp ( \mathcal A ^S)$ for $\cup \{\supp (f): f \in
 \mathcal A ^S\}$.
 \end{defn}
%

 \begin{defn}\label{convn} A set ${\mathcal A}\flat $ of maps $f: S \to \mathcal A $ is
\textbf{convolution admissible} if for each $f,g \in {\mathcal
A}\flat$ and $s \in S$ there are only finitely many $s' \in \supp
(f),$ $s'' \in \supp (g),$ with $s'+s'' = s.$
\end{defn}

\begin{example}\label{power1}
${\mathcal A}\flat$ is convolution admissible whenever $S =
\Net^{(I)}$ (the direct sum) for some index set $I$, since the
condition of Definition~\ref{convn} already holds
 in $S$.
 \end{example}

\begin{defn}\label{grsem} Suppose ${\mathcal A}\flat$  is a convolution admissible set.
 The \textbf{convolution product}
${\mathcal A}\flat \times {\mathcal A}\flat\to {\mathcal A}\flat$ is
given by defining $fg$ to be the function satisfying
$$fg(s) = \sum _{s'+s'' = s} f(s')g(s'').$$
%

%
%
\end{defn}

The intuitive way to receive a negation map on ${\mathcal A}\flat$
from a negation map on $\mathcal A$  would be to define $((-)f)(s) =
(-)(f(s));$ these maps also are convolution admissible,
so  one would expand ${\mathcal A}\flat$ to include them.

\
\subsection{Graded semirings and modules}$ $

We want to grade semirings and their modules. We define direct sums
in the usual way.

\begin{defn}

An $L$-\textbf{graded} $\tT$-\semiring0  is a $\tT$-\semiring0
$\mathcal R $ which also is an $L$-graded $\tT$-module $\mathcal R :
= \oplus _{\ell \in L} \mathcal R _\ell$ for semigroups $(\mathcal R
_\ell,+)$ satisfying the following conditions, where $\tT_ \ell =
 \tT\cap  \mathcal R
_\ell:$
\begin{enumerate}\eroman
    \item $\tT = \cup _{\ell \in L} \tT _\ell;$
\item $\mathcal R_\ell \mathcal R
_{\ell'} \subseteq \mathcal R_{\ell +\ell'},$ $\forall \ell, \ell'
\in L.$
 \end{enumerate}

\end{defn}

Note that $\mathcal R_\zero $ is a $\tT_\zero$-module, and also a
\semiring0, over which each $\mathcal R_\ell \cup \{\zero\}$ is a
module.

When we turn to Grassmann  semialgebras, $L$ will be ordered with a
minimal element~$0$; one could take $L = \mathbb N$, for example. We
write $\mathcal M^{> 0}: = \oplus _{\ell
>0} \mathcal M _\ell$, a submodule of $\mathcal M$ lacking the constant
component. Then $\mathcal R^{> 0} $ is a sub-\semiringg0 of
$\mathcal R$.

%
%
%

\subsubsection{Super-semialgebras}$ $

Here is an interesting special case.

\begin{defn} \label{Liesup}  A  \textbf{super-semialgebra}  is a $\mathbb
Z_2$-graded semialgebra $\mathcal A: = \mathcal A_0 \oplus \mathcal
A_1$, i.e., satisfying \textbf{twist multiplication}:

\begin{equation}\label{twi7}(a_0,a_1)(a'_0,a'_1) = (a_0a'_0 + a_1 a'_1,
a_0a'_1 + a_1 a'_0). \end{equation}
\end{defn}

A natural way of getting a $\Z_2$-grade from an $\Net$-graded
semialgebra is to take the 0-grade to be the set of even indices and
the 1-grade to be the set of odd indices.

 \subsubsection{The  power series  semiring of a graded \semiring0}$ $

From now on, we take $S = \Net$ as in Example~\ref{power1}, which is
equivalent to the following.

\begin{defn}\label{psr} Given an $\Net$-graded $\tT$-semiring $\mathcal R $ with respect to the semigroup
$(\Net,+)$, we define the power series semiring $\mathcal R[[z]]$
over a central indeterminate $z$, in the usual way as possibly
infinite formal sums (with convolution product), and its
sub-semiring $\mathcal R[z] = \sum _j \mathcal R _j z^j.$
\end{defn}

\begin{lem}\label{psr13} $\mathcal R[[z]]$ and  $\mathcal R[z]$  are indeed semirings.
Both
 $\mathcal R[[z]]$ and  $\mathcal R[z]$ are graded by the powers of
~$z$. \end{lem}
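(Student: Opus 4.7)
The plan is to realize both $\mathcal R[[z]]$ and $\mathcal R[[z]]_\gr$ as instances of the convolution semialgebra construction of Definition~\ref{grsem}, and then deduce the semiring axioms and the grading from what has already been established in Proposition~\ref{conv11}. Specifically, take $S = \Net$ (as in Example~\ref{power1}, so that convolution admissibility is automatic) and identify a formal power series $\sum_j r_j z^j$ with the map $f \colon \Net \to \mathcal R$ defined by $f(j) = r_j$. Under this identification, componentwise addition of power series is the semigroup operation of $\mathcal R^{\Net}$, and the usual Cauchy product $(fg)(s) = \sum_{j+k=s} f(j)g(k)$ is exactly the convolution product. Hence $\mathcal R[[z]]$ coincides with the convolution semialgebra on the full ambient semigroup $\mathcal R^{\Net}$.

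For $\mathcal R[[z]]$, Proposition~\ref{conv11} then immediately yields that it is a semiring (distributivity and associativity being inherited componentwise from $\mathcal R$, and the constant function $\tilde{\one}_R$ at degree $0$ serving as the multiplicative identity). The grading by powers of $z$ is built into the construction: setting the $j$-th component to be $\mathcal R z^j$ (i.e.\ functions supported at $\{j\}$), one has $\mathcal R z^j \cdot \mathcal R z^k \subseteq \mathcal R z^{j+k}$ from the convolution formula, so the indexing semigroup for the $z$-grading is $(\Net,+)$.

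For $\mathcal R[[z]]_\gr$, the only additional content is closure under addition and multiplication inside $\mathcal R[[z]]$. Closure under addition is immediate since each $\mathcal R_j$ is an additive sub-semigroup of $\mathcal R$. Closure under multiplication uses the graded semiring axiom from the definition of $L$-graded $\tT$-\semiring0: for $r_j \in \mathcal R_j$ and $r'_k \in \mathcal R_k$, one has $r_j r'_k \in \mathcal R_{j+k}$, and hence the $z^s$-coefficient $\sum_{j+k=s} r_j r'_k$ of a product of two elements of $\mathcal R[[z]]_\gr$ lies in $\mathcal R_s$ as required. The grading by powers of $z$ on $\mathcal R[[z]]_\gr$ is then given by the components $\mathcal R_j z^j$, which multiply via $\mathcal R_j z^j \cdot \mathcal R_k z^k \subseteq \mathcal R_{j+k} z^{j+k}$ by exactly the same calculation.

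There is no real obstacle here: everything reduces to routine bookkeeping, with the one conceptual point being the matching between the formal power series notation $\sum_j r_j z^j$ and the functor-category viewpoint of $\mathcal R^{\Net}$ that was set up in Definition~\ref{grsem}. The mild subtlety worth flagging is that, in order to fit the definition of a $\tT$-\semiring0 on the nose, one should declare the tangible set of $\mathcal R[[z]]$ (resp.\ $\mathcal R[[z]]_\gr$) to consist of the monomials $a z^j$ with $a \in \tT$ (resp.\ $a \in \tT_j$); once this is done, conditions (i) and (ii) of the definition of an $L$-graded $\tT$-\semiring0 are directly verified.
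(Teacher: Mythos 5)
Your proposal is correct, but it takes a genuinely different route from the paper's. The paper disposes of the claim that $\mathcal R[[z]]$ is a semiring with the phrase ``by the customary verification'' (i.e.\ a direct, unstructured check of associativity, distributivity, etc.), and then verifies closure of $\mathcal R[[z]]_\gr$ and the grading via $\mathcal R_j z^j\cdot\mathcal R_k z^k\subseteq\mathcal R_{j+k}z^{j+k}$. You instead reduce the first claim to Proposition~\ref{conv11} by identifying $\mathcal R[[z]]$ with the convolution semialgebra on $\mathcal R^{\Net}$; the closure and grading checks for $\mathcal R[[z]]_\gr$ you then do essentially as the paper does. The reduction is conceptually pleasing because it exhibits $\mathcal R[[z]]$ as the specialization $S=\Net$ of the machinery already set up, and it also makes the grading assertion automatic rather than a separate computation. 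Two small points to tidy up. First, the ``convolution semialgebra'' of Definition~\ref{grsem} is, on its face, the set of formal sums of $\tT$-valued functions supported at singletons; that this equals all of $\mathcal R[[z]]$ requires $\tT$ to additively generate $\mathcal R$, which the definition of $\tT$-semiring does not impose in general. The argument is not really harmed, since the componentwise verification in the proof of Proposition~\ref{conv11} works verbatim for the full $\mathcal R^{\Net}$, but it is worth saying so explicitly rather than appealing to the proposition as a black box. Second, the phrase ``the constant function $\tilde{\one}_R$ at degree $0$'' is a slip: the multiplicative identity is the function supported at $\{0\}$ with value $\one_R$, not the constant function $\tilde{\one}_R$ as defined in the paper (which sends every $s$ to $\one_R$).
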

\begin{proof} $\mathcal R[[z]]$ satisfies the axioms of a semiring,
by the customary verification, and its subset $\mathcal R[z] $ is
closed under addition and multiplication, so both are semirings. The
last assertion follows from the fact that $\mathcal R _j   z^j
\mathcal R _k   z^k \subseteq \mathcal R _{j+k}   z^{j+k}.$
\end{proof}

\begin{defn} \label{hsderiva} Suppose $ \mathcal A$ is a semialgebra over a commutative  base
\semiring0. We write $\End ( \mathcal A)$ for the set of module maps
$ \mathcal A \to \mathcal A.$ Given $D \in \End ( \mathcal A) ^S,$
we write $D_s$ for the map given by $s \mapsto D(s)$
  and $s' \mapsto \zero, \ \forall s'\ne s$. In the other direction, given $f_s:  \in \End (\mathcal A)$, $s
\in S$, each of singleton support $\{ s\}$ with the $\{ s\}$
distinct, define $D^f \in \End ( \mathcal A) ^S$ via $D^f (s) =
f_s.$

\begin{rem}\label{ge3} $D^f(s)(ab) = D^f(s)(a)D^f(s)(b), \forall a,b\in \mathcal A
,$  under the usual product, seen by matching terms  in the left
side and the right side.
\end{rem}

\subsection{Higher  derivations}$ $

This discussion applies to any semialgebra $\mathcal A$, not
necessarily associative and not necessarily a triple. A
\textbf{derivation} $\delta: \mathcal A \to \mathcal A$ is a map in
$\End ( \mathcal A)$ satisfying $\delta(ab) = a \delta(b) +
\delta(a)b.$ The following concepts were introduced by Hasse and
Schmidt~\cite{HaS} and studied further by
Heerema~\cite{He1,He2,He3}.


 For  $S $ convolution admissible
       (not necessarily associative), a homogeneous map $D$ in
$(\End \mathcal A)^S$  is called  a
 \textbf{higher  derivation}
  of
  $A$ if it satisfies the conditions:
  \begin{enumerate}
  \eroman

\item
$ D_s   (ab)= \sum_{s'+s''=s}D_{s'}(a) D_{s''}(b),\quad \forall s\in
S, \quad \forall a,b\in \mathcal A .$
\item $D_0
 = \one$  (the identity map on
  $A$).
  \end{enumerate}
  \end{defn}

Property   (i)  is  called  the
  \textbf{Leibniz   rule}, obtained from the
   more  familiar  Leibniz
rule for derivations for $S = \Net$ by dividing by $k !$. 
%

  \cite[pp.~190-191]{HaS} indicates how to define a higher derivation
  $D$. We have a somewhat different take, along classical lines.
We consider semialgebras over  semifields containing $\Q_{>0}$, for
the following definition to make sense. Given a map $f: \mathcal A
\to \mathcal A[[z]]$ we define its \textbf{exponential} $\exp(f) =
\sum _{j\ge 1} \frac {f^j}{j!} : \mathcal A \to \mathcal A[[z]]$. It
is well-known that the exponential of a derivation is a
homomorphism. In fact we have:

\begin{lem}\label{ge4} If $d_1, d_2, \dots $ is a sequence of
derivations, then $\sum d_k z^k : \mathcal A \to \mathcal A[[z]] $
satisfies Leibniz' rule. Its exponential is a semialgebra
homomorphism: $D := \sum D_ r z^r := exp( \sum _{k\ge 1} d_k z^k) :
\mathcal A \to \mathcal A[[z]]. $
\end{lem}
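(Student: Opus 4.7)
The plan splits naturally into the two assertions. First, to see that $D' := \sum_{k \ge 1} d_k z^k$, regarded as a map $\mathcal A \to \mathcal A[[z]]$, satisfies the (ungraded) Leibniz rule $D'(ab) = aD'(b) + D'(a)b$, I would just collect the coefficient of each $z^k$: since $d_k$ is a derivation for every $k \ge 1$, one has $d_k(ab) = a d_k(b) + d_k(a) b$, and summing these identities weighted by $z^k$ gives exactly what is wanted. An immediate byproduct is that $D'$ has zero constant term, so its powers shift the $z$-degree strictly upward; this ensures that the formal sum defining $\exp(D')$ has a finite $z^r$-coefficient for every $r$, and in particular lives in $\mathcal A[[z]]$.

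For the second claim, I would view $D'$ as an element of the semiring $\End(\mathcal A)[[z]]$ (with multiplication given by composition of endomorphisms), so that $\exp(D') = \sum_{n \ge 0} (D')^n / n!$ makes sense. Evaluated at $a \in \mathcal A$, it returns an element of $\mathcal A[[z]]$. The central combinatorial step is the iterated Leibniz formula
\begin{equation*}
(D')^n(ab) \;=\; \sum_{k=0}^{n} \binom{n}{k}\, (D')^k(a)\, (D')^{n-k}(b),
\end{equation*}
which I would prove by induction on $n$ using only the one-step Leibniz rule just established, together with Pascal's identity $\binom{n}{k} + \binom{n}{k-1} = \binom{n+1}{k}$. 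Dividing by $n!$, rewriting $\binom{n}{k}/n! = 1/(k!(n-k)!)$, and recognizing a Cauchy product then yields
\begin{equation*}
\exp(D')(ab) \;=\; \sum_{k, m \ge 0} \frac{(D')^k(a)}{k!} \cdot \frac{(D')^m(b)}{m!} \;=\; \exp(D')(a) \cdot \exp(D')(b).
\end{equation*}
Additivity of $\exp(D')$ is inherited coefficientwise from the additivity of each $(D')^n$, and $\exp(D')(\one) = \one$ because $d_k(\one) = \zero$ for all $k \ge 1$. Hence $\exp(D')$ is a semialgebra homomorphism $\mathcal A \to \mathcal A[[z]]$; matching coefficients of $z^r$ extracts the $D_r$ of the statement and confirms $D_0 = \one$.

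The main subtlety in the semiring context is that the textbook slick proofs — differentiating $\exp(\delta)(ab)\exp(\delta)(a)^{-1}\exp(\delta)(b)^{-1}$, or arguing that a certain difference lies in a kernel — both invoke subtraction and/or multiplicative inverses, neither of which is available in a general \semiring0. The detour through the explicit Leibniz/Cauchy expansion bypasses this obstruction because every identity involved is purely additive with non-negative rational coefficients, and the presence of $1/k!$ is guaranteed by the hypothesis that the base \semifield0 contains $\Q_{>0}$. A small bookkeeping check, handled once and for all by the vanishing of the constant term of $D'$, is that only finitely many pairs $(k,n)$ contribute to any given $z^r$-coefficient, so the displayed equalities of formal power series actually make sense in $\mathcal A[[z]]$.
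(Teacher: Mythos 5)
Your argument is correct and, in spirit, is exactly what the paper has in mind; the difference is that the paper's ``proof'' of Lemma~\ref{ge4} is a one-line citation to \cite{W} and \cite[Propositions~3.4.2--3.4.3]{GaSa} together with the remark that the argument ``evidently extends'' to semialgebras over a semifield containing $\Q_{>0}$. You have supplied precisely the missing details: the one-step Leibniz rule for $D'=\sum_{k\ge 1}d_kz^k$ by coefficient matching, the iterated binomial Leibniz formula for $(D')^n$, the Cauchy-product rearrangement giving multiplicativity of $\exp(D')$, and --- most importantly --- the explicit observation that every identity used is purely additive with coefficients in $\Q_{>0}$, so that no subtraction or cancellation is invoked. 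That last observation is exactly the content of the paper's claim that the classical proof ``extends,'' so your writeup is a genuine improvement in completeness rather than a different route.

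Two small points worth tightening. First, the induction establishing $(D')^{n+1}(ab)=\sum_k\binom{n+1}{k}(D')^k(a)(D')^{n+1-k}(b)$ applies the Leibniz rule to the products $(D')^k(a)\cdot(D')^{n-k}(b)$, which live in $\mathcal A[[z]]$ rather than $\mathcal A$; one should say explicitly that $D'$ extends coefficientwise to a derivation of $\mathcal A[[z]]$ (each $d_k$ extended $z$-linearly is still a derivation, and $z^k$ is central), so the induction is legitimate. Second, your claim that $\exp(D')(\one)=\one$ because $d_k(\one)=\zero$ is not automatic in a \semiring0: the usual computation gives only $d_k(\one)=d_k(\one)+d_k(\one)$, which forces $d_k(\one)=\zero$ in a ring (or a cancellative semiring) but not in general. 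This does not affect the substance of the lemma, whose point is multiplicativity $\exp(D')(ab)=\exp(D')(a)\exp(D')(b)$ together with $D_0=\one$ as the degree-zero coefficient (which is immediate since $(D')^0=\operatorname{id}$), but if you want $\exp(D')$ to be unital as a map $\mathcal A\to\mathcal A[[z]]$ you should either assume $d_k(\one)=\zero$ or note that it follows when $\mathcal A$ is additively cancellative.
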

\begin{proof} Given in \cite{W} and \cite[Propositions 3.4.2 and 3.4.3]{GaSa}.
The proof evidently extends to semialgebras over  semifields
containing $\Q_{>0}$.
\end{proof}

Matching coefficients in Lemma~\ref{ge4}, one gets precisely the
Schur polynomials associated to the sequence  $d_1, d_2, \dots $. In
particular: $$ D_1 = d_1, \quad D_2 = \frac{d_1^2} 2 + d_2, \quad
D_3 = \frac{d_1^3}{ 3!} + d_1d_2 + d_3, \quad D_4 = \frac{d_1^4}
{4!}+ \frac 1 2   d_1^2 d_2 + \frac 1 2  d_2^2 + d_1d_3 + d_4,$$
defines a higher derivation $D$.


When each $d_s = \delta$ for a given derivation $\delta$, we call
$D$ the \textbf{higher derivation} of $ \delta$.

\section{Grassmann semialgebras}\label{grT1}

  Suppose $\mathcal A$ is a
commutative semiring and ~$V$ is an $\mathcal A$-module. $\mathfrak
H$ will denote an $\mathcal A$-semialgebra generated by $V$. (Often
$\mathfrak H$ will be the tensor algebra $T(V)$ defined below.) We
write ${\tT_{\mathfrak H}}_k$ for the products of length~$k$ of
elements of $V$,    and $\mathfrak H_{\ge k}$ for the ideal $\sum
_{j\ge k} {\tT_{\mathfrak H}}_{j}$ (with repetitions). Thus
$\mathfrak H = \mathcal A + V + \mathfrak H_{\ge 2}$. The elements
of ${\tT_{\mathfrak H}}_k$ and ${\tT_{\mathfrak H}}_l$ will satisfy
$w _k w' _l = (-1)^{kl}w' _l w_k ,$ leading to the subject of our
study.

\begin{defn}\label{Grass1} A \textbf{Grassmann}, or \textbf{exterior},
semialgebra, over a \semiring0 $\mathcal  A$  and an $\mathcal
A$-module $V$,
 is a semialgebra $\mathfrak H$ generated by $\mathcal A$ and
 $V$, as above,
together with  a negation map on  $\mathfrak H_{\ge 2}$ and an
associative \textbf{wedge product} $\w: \mathfrak H \times \mathfrak
H \to \mathfrak H$ satisfying
     \begin{equation}\label{G2} v_1\w v_2 = (-)
 v_2 \w v_1 \qquad  \text{for} \qquad v_i \in V.
 \end{equation}

\end{defn}

Thus $v_{\pi(1)}\w \cdots \w   v_{\pi(t)} = (-)^{\pi} v_1\w \cdots
\w v_t$ for $t \ge 2,$ where  $(-)^{\pi}$ denotes the sign  of the
permutation. This ties in with the theory of triples since, taking
$\tT(V)$ to be the nonzero products of elements of $V$, then for
$k\ge 2,$ $(\mathfrak H_{\ge 2}, \tT(V)_{\ge 2},(-))$ often is a
triple, for $(-)$ suitably defined (as in Theorem~\ref{ge22}). Since
$\tT(V)_{\ge 2} \subset \mathfrak H_{\ge 2}$, it can be bypassed by
restricting functions, but the negation map $(-)$ will play a
crucial role.

\begin{lem}\label{welld} If $V$ is spanned by $\{ b_i : i\in I\}$, then to verify  the   Grassmann
relation \eqref{G2} it is enough to check   that $$  b_i \wedge b_j
= (-) b_j \wedge b_i, \qquad \forall i,j \in I.$$
\end{lem}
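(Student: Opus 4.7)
The plan is to reduce the desired identity on arbitrary elements of $V$ to the hypothesis on basis vectors by exploiting bilinearity of $\wedge$ together with the two defining features of a negation map on $\mathfrak{G}_{\ge 2}$: it is a semigroup isomorphism (hence distributes over sums), and it respects the $\tT$-action, in the sense that $(-)(a x) = a((-)x)$ for $a\in\tT$ and $x\in\mathfrak{G}_{\ge 2}$.

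Concretely, given $v_1, v_2 \in V$, I would write $v_1 = \sum_i \alpha_i b_i$ and $v_2 = \sum_j \beta_j b_j$ with $\alpha_i,\beta_j \in \mathcal{A}$, almost all zero. Applying distributivity of $\wedge$ on both sides gives
\[
v_1 \w v_2 \;=\; \sum_{i,j} \alpha_i \beta_j\, (b_i \w b_j), \qquad v_2 \w v_1 \;=\; \sum_{i,j} \alpha_i \beta_j\, (b_j \w b_i).
\]
Now $(-)$ applied to the second sum distributes over the (finite) sum because $(-)$ is a semigroup homomorphism, and slides past the scalar $\alpha_i\beta_j\in\mathcal{A}$ by the $\tT$-compatibility clause of Definition~\ref{negmap}, yielding
\[
(-)(v_2 \w v_1) \;=\; \sum_{i,j} \alpha_i \beta_j\, \bigl((-)(b_j \w b_i)\bigr).
\]
Applying the hypothesis $b_j \w b_i = (-)(b_i \w b_j)$ and using $(-)^2 = \mathrm{id}$ replaces each summand $(-)(b_j\w b_i)$ by $b_i \w b_j$, matching the expansion of $v_1 \w v_2$ term by term.

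The only subtlety worth pausing on is that the negation map is defined on $\mathfrak{G}_{\ge 2}$, not on $V$ itself, so every invocation of $(-)$ must be on a genuine product $b_i \w b_j \in \mathfrak{G}_2$; since the identities above only invoke $(-)$ after forming such products, this is fine. One should also check that the coefficients $\alpha_i\beta_j$ commute past $(-)$, but this is precisely the $\tT$-action axiom $(-)(a x) = a((-)x)$ applied with $a = \alpha_i\beta_j$. The main ``obstacle,'' if any, is purely bookkeeping: keeping track of which elements lie in $V$ versus $\mathfrak{G}_{\ge 2}$ so that each use of $(-)$ is legal, and recognizing that since we are in a semiring there is no actual cancellation occurring — the argument is a strict equality throughout, not merely up to $\preceq_\circ$.
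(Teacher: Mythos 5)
Your proof is correct and follows essentially the same route as the paper: expand $v_1\wedge v_2$ and $v_2\wedge v_1$ bilinearly over the spanning set and apply the hypothesis pairwise on the $b_i\wedge b_j$. Your version is merely more explicit about which negation-map axioms (semigroup homomorphism, $\tT$-compatibility, involutivity) justify each step, and about $(-)$ living on $\mathfrak G_{\ge 2}$ rather than on $V$; the paper's one-line distributivity computation is the same argument compressed.
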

\begin{proof} Distributivity yields
 $$\left(\sum \a_i b_i\right)\wedge \left(\sum \beta_j b_j\right) = \sum \a _i \beta_j b_i \wedge b_j = (-)\sum \a _i \beta_j b_j \wedge b_i =
(-) \left(\sum \beta_j b_j\right)\wedge\left(\sum \a_i b_i\right),$$
 yielding the assertion.\end{proof}

We  write $v^k$ for $v \wedge \dots \wedge v$ taken $k$ times.

\begin{lem} $(\sum \a_i a_i)^2 \succeq_\circ \sum \a _i^2 a_i^2$ for
 any Grassmann semialgebra.
\end{lem}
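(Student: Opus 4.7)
The plan is to expand the square, isolate the diagonal terms $\sum \alpha_i^2 a_i^2$ (which appear on the right), and show that the remaining cross terms form a quasi-zero, so that the resulting difference lies in $\mathfrak{G}^\circ$, giving the $\succeq_\circ$ relation by Definition~\ref{precmain0}.

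More precisely, I would first write
\[
\Bigl(\sum_i \alpha_i a_i\Bigr)^2 \;=\; \sum_i \alpha_i^2\, a_i^2 \;+\; \sum_{i\ne j} \alpha_i\alpha_j\, a_i \wedge a_j
\]
by distributivity of $\wedge$, justified as in the proof of Lemma~\ref{welld}. The second sum can be grouped in unordered pairs $\{i,j\}$ with $i\ne j$ as
\[
\sum_{\{i,j\},\, i\ne j} \alpha_i\alpha_j\,(a_i\wedge a_j + a_j\wedge a_i).
\]

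The key step is to recognise each pair as a quasi-zero. By the defining Grassmann relation \eqref{G2}, $a_i\wedge a_j = (-)\,a_j\wedge a_i$ for $a_i, a_j\in V$, so
\[
a_i\wedge a_j + a_j\wedge a_i \;=\; (-)(a_j\wedge a_i) + a_j\wedge a_i \;=\; (a_j\wedge a_i)^\circ \;\in\; \mathfrak{G}^\circ.
\]
Here I am implicitly using that the negation map is defined on $\mathfrak{G}_{\ge 2}$, which is exactly where $a_i\wedge a_j$ lives. Since $\mathfrak{G}^\circ$ is a $\tT$-submodule (in fact an ideal of $\mathfrak{G}$), the scalar multiple $\alpha_i\alpha_j (a_j\wedge a_i)^\circ$ is again a quasi-zero, and a finite sum of quasi-zeros is a quasi-zero.

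Therefore the difference $\bigl(\sum_i \alpha_i a_i\bigr)^2 - \sum_i \alpha_i^2 a_i^2$ is of the form $d\in\mathfrak{G}^\circ$ (working additively, with no actual subtraction involved), so by Definition~\ref{precmain0} we have $\sum_i \alpha_i^2 a_i^2 \preceq_\circ (\sum_i \alpha_i a_i)^2$. No step is really an obstacle; the only mild care needed is to keep track that $(-)$ is only defined on $\mathfrak{G}_{\ge 2}$ in Definition~\ref{Grass1}, which is precisely the submodule containing all the off-diagonal terms $a_i\wedge a_j$, so the argument never leaves the domain where $(-)$ is available.
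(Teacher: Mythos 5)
Your proof is correct and follows essentially the same route as the paper: expand the square by distributivity, separate the diagonal terms, and observe that each cross pair $a_i\wedge a_j + a_j\wedge a_i$ is a quasi-zero by the Grassmann relation~\eqref{G2}, so the sum of cross terms lies in $\mathfrak{G}^\circ$. The paper's proof simply records the expansion and leaves the quasi-zero identification implicit, whereas you spell out the use of~\eqref{G2} and the ideal property of $\mathfrak{G}^\circ$; this is the intended reasoning, not a different one.
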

\begin{proof} $(\sum \a_i a_i)^2 = \sum \a _i^2 a_i^2 + \sum _{i<j} \a_i \a_j (a_i \wedge a_j + a_j
\wedge a_i) $.
\end{proof}

To obtain Grassmann semialgebras via Definition~\ref{stdG} below, we
follow the familiar construction of the Grassmann  algebra over a
module $V$, but with modifications necessitated by working over
semirings.

Accordingly, as in \cite[Remark~6.35]{Row16} and
{\cite[Definition~6.10]{JuR1}}, we define the \textbf{tensor
semialgebra} $T(V) = \bigoplus _n V^{\otimes (n)}$ (adjoining a copy
of $\mathcal A$ if we want to have a unit element), with the usual
multiplication $v v' := v\otimes v'$.

\begin{thm}\label{ge22}
Write $T(V)_{\ge 2}$ for $ \bigoplus _{n\ge 2} V^{\otimes (n)}$.
 Then  $T(V)_{\ge 2}$ has a
negation map $(-)$ satisfying $$b_{\pi(i_1)}\otimes \cdots \otimes
 b_{\pi(i_t)} = (-)^{\pi} b_{i_1}\otimes \cdots
\otimes b_{i_t},$$ for $b_{i_j}\in V.$ 
\end{thm}
\begin{proof} By Lemma~\ref{welld}, we may take a generating set
$\{b_i : i \in I\}$ of $V$, where $I$ is an ordered index set. 
 We define a negation on~$V \otimes V$ by
$(-)b_i \otimes b_j = b_j \otimes b_i.$ (This is possible since it
preserves the bilinear relations  defining the tensor product.)
Since this is homogeneous of degree 2, it defines a negation on
$\mathfrak G(V)_{2}$ given by $(-)b_i \otimes b_j = b_j  \otimes
b_i.$ When $i<j$ we thus rename $b_j \otimes  b _i$ as $(-)b_i
\otimes  b_j$. It is easy to see that this is the same as defining a
reduction procedure. Thus $b_{\pi(i_1)}\cdots b_{\pi(i_t)} \mapsto
(-)^{\pi} b_{i_1}\cdots b_{i_t},$ where $\pi$ is the permutation
rearranging the indices $i_1\dots ,i_t$ in ascending order. We get
$(-)^{\pi}$
  by writing $\pi$ as a product of transpositions;
since $(-)^{\pi}$ is independent of the way we write $\pi$ in this
manner, our reduction procedure is well-defined, cf.~\cite{New}.

\end{proof}

%

We continue to develop the Grassmann theory. We can eliminate many
occurrences of $(-)$ in our formulas by switching two of the $ b_i.$
The tricky part is dealing with degree 1, i.e., in $V$ itself, where
we cannot perform this switch. But issues like determinants and
linear independence of $n$ vectors are trivial for $n=1$, thereby
enabling us to forego $(-)$ on elements of degree 1. In this manner,
our way out in \S\ref{basics} is to focus on elements of degree
$>1$.


\begin{defn}\label{biG}
  $\tT_{\operatorname{even}}^{\ge 2}$ is the set of
all even products of elements of $V$, not including the constants
$\mathcal A$, $\mathfrak G_{\operatorname{even}}^{\ge 2}$ is the
submodule of~$\mathfrak G$ generated by
$\tT_{\operatorname{even}}^{\ge 2}$, $\tT_{\operatorname{odd}}$ is
the  set of all odd products of elements of $V$, and $\mathfrak
G_{\operatorname{odd}}$ is the submodule of $\mathfrak G$ generated
by $\tT_{\operatorname{odd}}$.
\end{defn}

\begin{lem} If $v\in \mathfrak G_i$ and  $v'\in \mathfrak G_j$ for $i,j \ge 1$ then
 \begin{equation}\label{G27} v \wedge v' = (-)^{i+j}
v' \wedge v,
 \end{equation}
 where $(-)$ is given as in Theorem~\ref{ge22}.
\end{lem}
\begin{proof} Easy induction on $i$ and $j$.
\end{proof}

\begin{defn}
 ${T(V)^\circ}^\sharp $ is the ideal of $T(V)$ generated by $T(V)^\circ $ and all elements $v\otimes
v, \, v \in V.$\end{defn} (This is just $T(V)^\circ $ when $\frac 12
\in \mathcal A$ since then $v\otimes v = (\frac 12 v\otimes
v)^\circ$). 
We now weaken ~$\preceq_\circ$.
\begin{defn} (Supplanting Definition~\ref{precmain0})
 $a_0 \preceq a_1$ in $T(V)$ if $a_1
= a_0 + d$ for some $d\in {T(V)^\circ}^\sharp$.
\end{defn}

\subsubsection{The \textbf{standard   Grassmann semialgebra}} $ $

 Recall that the way to define factor structures in universal
algebra (in particular, for \semirings0 or modules over \semirings0)
is to mod out by a congruence. 

%
%

 \begin{thm}\label{impl}
If $v^2 = \zero $ for all $v$ in $V$, then for any permutation $\pi$
and all $v_i \in V,$  \begin{enumerate}\eroman
\item $v \w v_1 \w \dots \w v_n\w v=\zero.$
\item $v_{\pi(1)}\w
\dots \w v_{\pi(n)} = v_1 \w \dots \w v_n$ if $\pi$ is even;
\item  $v_1 \w \dots \w v_n + v_{\pi(1)}\w
\dots \w v_{\pi(n)} = \zero$ if $\pi$ is odd.
 \end{enumerate}
 Thus the only quasi-zeros are 0.
\end{thm}
\begin{proof}  Linearizing yields
$$\zero = (v_1+v_2)^2= v_1^2 + v_2^2 + v_1\w v_2 + v_2 \w v_1 = \zero +
\zero + v_1\w v_2 + v_2\w v_1,$$  so  $v_1\w v_2 + v_2\w v_1=
\zero.$ Now (i) is by induction on $n$, since
 $v \w v_1 \w \dots \w v_n\w v =   v \w v_1 \w \dots \w v_n\w +\zero = (v \w v_1 + v_1 \w v)\w \dots \w v_n\w v
 =\zero.$

To get (ii) and (iii) we write $\pi $ as a product of transpositions
$\pi_1 \cdots \pi _k$ of the form $(i,i+1)$. If $k=2$ then $v_1 \w
v_2 \w v_3 = v_1 \w v_2 \w v_3 + (v_2\w (v_1\w v_3 +  v_3 \w v_1)=
(v_1 \w v_2 + v_2\w v_1)\w v_3 + v_2\w v_3 \w v_1= v_2\w v_3 \w
v_1,$ and then we have (ii) for all even $k$.

For $k$ odd, we use (ii) to reduce to $ v_1 \w v_2 \w \dots \w v_n\,
+\, v_2 \w v_1 \w \dots \w v_n = \zero$.

The last assertion follows by using these equalities to reduce every
quasi-zero until reaching $\zero.$
\end{proof}

 \begin{defn}\label{stdG}
  The \textbf{standard   Grassmann
semialgebra} $\bigwedge V$ with respect to a given generating set
$\{b_i: i \in I\}$ of $V$, also denoted $\mathfrak G(V)$, is   $
T(V)/\Phi $, where $\Phi$ is the congruence generated by  $(v \w
v,\zero),$ $\forall v \in V.$ Accordingly $\mathfrak G(V)_{k}$ is
$T(V)_{k}/\Phi$, and $ \mathfrak G(V)_{\ge 2}= T(V)_{\ge 2}/\Phi$.

The \textbf{standard   Grassmann triple} is $(\mathfrak G(V)_{\ge
2},\tT_{\mathfrak G(V)_{\ge 2}},(-)),$ where $\tT_{\mathfrak
G(V)_{\ge 2}}$ is the product of elements of $V$ of length $\ge 2,$
and $(-)$ is as in Theorem~\ref{ge22}.
 \end{defn}



\subsection{Symmetrization and the twist action}\label{symm}$ $

There is a general way to provide a negation map for arbitrary
Grassmann semialgebras. Although $\tT$-modules initially may lack
negation, one can obtain negation maps for them through the next
main idea, the symmetrization process, which although a special case
of super-semialgebras and their modules, provides a crucial method
of creating a triple. Tropical symmetrization dates back to
\cite{Gau}, and we recall the treatment for systems from
\cite{JuR1}.

 \begin{definition}\label{sym00}
Given any $\tT$-monoid module $\mathcal M$,   define its $\mathbb
Z_2$-graded \textbf{symmetrization} $\widehat {\mathcal M} =
\mathcal M \times \mathcal M$, with componentwise addition.

Also define $\widehat {\tT} =(\tT \times \{ \zero \}) \cup ( \{
\zero \} \times \tT)$ with  the \textbf{twist action} of $\widehat
\tT $ on $\widehat{\mathcal M}$
 given by the super-action, namely
 \begin{equation}\label{twi} (a_0,a_1)\ctw (c_0,c_1) =
 (a_0c_0 + a_1 c_1, a_0 c_1 + a_1 c_0), \quad a_i \in \tT, c_i\in  \mathcal M.  \end{equation}
\end{definition}

 \begin{defn}\label{symmod}  The \textbf{switch map}  $(-)_{\operatorname{sw}}$ on the
 symmetrized module
 $\widehat{\mathcal M}$ is given by
 $ (-)_{\operatorname{sw}}(c_0,c_1)=  (c_1,c_0).$

 If $\mathcal A$ is a semiring containing $\tT$,
 then we define the \textbf{twist action} as in \eqref{twi}, but
 this time with $a_i, c_i\in  \mathcal A.$
\end{defn}

\begin{thm}\label{gr1} (\cite[Theorems~2.41, 2.43]{JMR}) For any $\tT$-module $\mathcal A$, we can embed  $\mathcal
 A$ into $\widehat {\mathcal A}$ via $$\label{sum1} b \mapsto (b,\zero),$$ thereby
 obtaining a faithful functor from the category of semirings into  the category of
 semirings with a negation map (and preserving additive
 idempotence). This also yields a faithful functor from ordered semigroups to  signed $(-)_{\operatorname{sw}}$-bipotent
 systems. Any $\mathcal A$-module $\mathcal M$ yields a $\widehat{\mathcal A}$-module $\widehat{\mathcal M}= \mathcal M \oplus \mathcal M,$
 which has a signed
 decomposition   where $\mathcal M^+$ is the
 first component.\end{thm}

This applies to the Grassmann semialgebra:

 \begin{thm}\label{gr2}  Define $\mathfrak G(V)^\natural$ to be  $ \widehat{\mathfrak
G(V)}$ modded out by the congruence generated by $(v \w v',0) \cong
(0, v' \w v)$ for all $v,v' \in V$,  and $\overline{\tT}$ to be the
corresponding image of $\widehat {\tT}$. There is a triple
$(\mathfrak G(V)^\natural,\widehat {\tT},(-)_{\operatorname{sw}})$,
 together with an embedding of triples    $$ (\mathfrak G(V)_{\ge
2},\tT_{\mathfrak G(V)_{\ge 2}},(-)) \to (\mathfrak G(V)^\natural,
\overline{\tT},(-)_{\operatorname{sw}})$$  given by  $c \mapsto
(c,0)$.
\end{thm}

\begin{proof} Take ${\mathcal A}= \mathfrak
G(V)$ in Theorem~\ref{gr1}. $$(-)( v \w v')  = v' \w v \mapsto (v'
\w v, 0) = (-)_{\operatorname{sw}}(0, v'\w v) =
(-)_{\operatorname{sw}}(v\w v' ,0 ).$$ Thus $(-)$ matches by
\eqref{sum1}, so the Grassmann relations match.
\end{proof}
%
%
%
%
%

\subsection{The partially reduced Grassmann
system (when  $V$ is free)}$ $

The main results of this paper involve the \textbf{free module} $V$
 with base $\mathcal B = \{ b_0, \dots, b_{n-1}\},$ in the sense that any element of $V$
can be written uniquely as an $\mathcal  A$-linear combination of
the $b_i$.
 Let $V_n:=\Acal^{(n)}$    be the free module over the semiring $\Acal$ with basis $\bfb:=\{b_0,\ldots,b_{n-1}\}$ of $n$ elements.
  When  $V= V_n$,  this includes the definition  in
\cite[Definition~3.1.2]{GG}, in which $(-)$ is the identity map. In
this work we have two candidates for the Grassmann algebra, given
respectively in Theorems~\ref{gr2} and  \ref{gr3}.

\begin{rem}$ $ \begin{enumerate}\eroman
\item By  Theorem~\ref{impl}, $b_{\pi(i_1)}\w \cdots \w b_{\pi(i_t)}= (-)^{\pi} b_{i_1}\w
\cdots \w b_{i_t}$  for any permutation $\pi.$ Also,   every simple
tensor in which some $b_i$ repeats is $\zero$.

\item ${\mathfrak G}_k$ is free with a base of  $2{\binom n k}$ elements. For instance ${\mathfrak G}_2(V_3)$ has  base $b_1\w b_2$, $b_1\w b_3$, $b_2\w
b_3$ and their ``negations'' $b_2\w b_1$, $b_3\w b_1$, $b_3\w b_2$.
This phenomenon gives rise to the ``eigenvalue pair'' of \S\ref{eip}
below.
\end{enumerate}
\end{rem}

\begin{lem}\label{biG11} For the free Grassmann semialgebra,
  $\mathfrak G = \mathfrak G_{\operatorname{even}} \oplus \mathfrak G_{\operatorname{odd}}$ is a
  super-semialgebra, and its ideal $\mathfrak G^{\ge 2} = \mathfrak G_{\operatorname{even}}^{\ge 2} \oplus \mathfrak G_{\operatorname{odd}}^{\ge 2}$
 has the negation map from Theorem~\ref{ge22}. \end{lem}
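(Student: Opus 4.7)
The plan is to verify the two claims separately, each being essentially a bookkeeping argument built on the $\mathbb{N}$-grading of $T(V)$.

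First I would address the super-semialgebra structure. The free Grassmann semialgebra $\mathfrak G(V) = T(V)/\Phi$ inherits from $T(V)$ the $\mathbb{N}$-grading by tensor length, since $\Phi$ is generated by the homogeneous relations $b_i\otimes b_i \sim \zero$ (each of degree $2$). Reducing the $\mathbb{N}$-grading modulo $2$ partitions homogeneous generators into even-length and odd-length tensors, yielding the direct sum decomposition $\mathfrak G = \mathfrak G_{\operatorname{even}} \oplus \mathfrak G_{\operatorname{odd}}$. The twist multiplication formula \eqref{twi7} then reduces to observing that the product of a degree-$i$ element and a degree-$j$ element lies in degree $i+j$, so even $\cdot$ even and odd $\cdot$ odd both land in $\mathfrak G_{\operatorname{even}}$, while the cross terms land in $\mathfrak G_{\operatorname{odd}}$, which is exactly \eqref{twi7}.

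Next I would establish that the negation map of Theorem~\ref{ge22} descends to $\mathfrak G^{\ge 2} = T(V)_{\ge 2}/\Phi$. The key step is verifying that the congruence $\Phi$ is stable under $(-)$. Since $\Phi$ is generated by the relations $(b_i\otimes b_i,\zero)$ for $i\in I$, it suffices to observe that $(-)(b_i\otimes b_i) = b_i\otimes b_i$, because the defining action of $(-)$ on a rank-$2$ tensor is to transpose the two factors, which fixes $b_i\otimes b_i$. Hence $(-)$ preserves $\Phi$, induces a well-defined map on $T(V)_{\ge 2}/\Phi = \mathfrak G^{\ge 2}$, and this induced map inherits from the one of Theorem~\ref{ge22} the properties of a negation map (order $\le 2$, semigroup homomorphism, and compatibility with the $\tT$-action).

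Finally, the decomposition $\mathfrak G^{\ge 2} = \mathfrak G_{\operatorname{even}}^{\ge 2} \oplus \mathfrak G_{\operatorname{odd}}^{\ge 2}$ is immediate from intersecting the super-grading with the ideal $\mathfrak G^{\ge 2}$, and this ideal structure in turn follows from the fact that $\mathfrak G^{\ge 2} = \bigoplus_{k\ge 2}\mathfrak G_k$ absorbs multiplication by anything of degree $\ge 0$.

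I do not expect any serious obstacle: the only subtle point is the stability of $\Phi$ under $(-)$, and this is handled cleanly because the chosen generators $b_i\otimes b_i$ of $\Phi$ are fixed by the swap, so no additional relations need to be imposed for the negation map to descend.
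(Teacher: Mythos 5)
Your proof is correct and, at bottom, takes the same route as the paper's one-line argument (``by linearity, check products of the $b_i$ and induct on word length''), just spelled out more fully: you reduce the $\mathbb N$-grading mod $2$, check the twist on homogeneous pieces, and verify compatibility on generators. One small remark: your second step (that $\Phi$ is stable under $(-)$, hence the negation descends to $\mathfrak G^{\ge 2}$) largely re-derives the second conclusion of Theorem~\ref{ge22}, which already asserts the induced negation on $\mathfrak G(V)_{\ge 2}$; the content genuinely new to this lemma is the super-grading and the corresponding splitting of $\mathfrak G^{\ge 2}$, which you handle correctly.
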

\begin{proof}  By   linearity, we need only check products of the
$b_{i}$.
\end{proof}

\begin{lem}\label{wd} $(-)$  is well-defined, and
 \[\bar
b_{\pi(i_1)}\wedge \cdots  \wedge \bar  b_{\pi(i_t)} = (-)^{\pi}
\bar b_{i_1}\wedge \cdots \wedge \bar  b_{i_t}, \quad \forall t\ge
2.\]
\end{lem}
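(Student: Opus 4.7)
The plan is to reduce the well-definedness of the negation map $(-)$ on pure tensors of length $t\ge 2$ to the classical well-definedness of the sign character $\epsilon\colon S_t\to\{\pm 1\}$ of the symmetric group. First I would extend the swap rule on two factors, $(-)(\bar b_i\otimes \bar b_j)=\bar b_j\otimes \bar b_i$, to a swap rule on the $k$-th and $(k{+}1)$-st positions of a length-$t$ pure tensor:
\[\bar b_{i_1}\otimes\cdots\otimes \bar b_{i_{k+1}}\otimes \bar b_{i_k}\otimes\cdots\otimes \bar b_{i_t}=(-)\bigl(\bar b_{i_1}\otimes\cdots\otimes \bar b_{i_k}\otimes \bar b_{i_{k+1}}\otimes\cdots\otimes \bar b_{i_t}\bigr).\]
This step is legitimate because the negation on $V\otimes V$ already respects the bilinear relations (as noted in the proof of Theorem~\ref{ge22}), and tensoring with the untouched $\tT$-factors on either side preserves the relation by the $\tT$-action clause of Definition~\ref{negmap}.

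Next, given a permutation $\pi\in S_t$, I would write $\pi=\tau_{k_1}\cdots\tau_{k_N}$ as a product of $N$ adjacent transpositions. Iterating the displayed identity $N$ times produces
\[\bar b_{\pi(i_1)}\otimes\cdots\otimes \bar b_{\pi(i_t)}=(-)^N\bigl(\bar b_{i_1}\otimes\cdots\otimes \bar b_{i_t}\bigr).\]
By Lemma~\ref{repneg}, $(-)^2=\mathrm{id}$, so $(-)^N$ depends only on $N\bmod 2$. The main obstacle is then invoking the classical symmetric-group fact that $N\bmod 2$ is itself an invariant of $\pi$ (namely $\epsilon(\pi)$); this is what ensures confluence of the reduction procedure and lets us unambiguously set $(-)^{\pi}:=(-)^{N}$, irrespective of the chosen factorization of $\pi$ into adjacent transpositions.

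Finally, translating $\otimes$ into $\wedge$ according to the convention adopted immediately after Theorem~\ref{ge22} yields the wedge identity asserted in the lemma. I do not anticipate any further complication: all the substantive content is encoded in the already-established two-factor case together with the classical well-definedness of the sign, and no new relations among basis wedges are introduced at length $t>2$ beyond those forced by the length-$2$ swaps.
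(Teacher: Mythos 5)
Your proposal follows essentially the same route as the paper: decompose $\pi$ into transpositions, propagate the two-factor swap across the tensor slots, count the occurrences of $(-)$, and appeal to the classical well-definedness of the sign character to secure confluence. One small citation slip: the fact $(-)^2 = \mathrm{id}$ comes from Definition~\ref{negmap} (a negation map has order $\le 2$), not from Lemma~\ref{repneg}, which instead governs how powers of $(-)$ distribute over products.
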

\begin{proof} $(-)$  is well-defined  by Theorem~\ref{ge22}. The
formula follows from writing a permutation as the product of
transpositions,   noting that the sign of a permutation is
well-defined, and counting the number of times $(-)$ occurs.
\end{proof}

\begin{lem}\label{biG2} Suppose $\sum _{\mathbf i} \al _{\mathbf i} b_{i_1} \otimes \cdots \otimes b_{i_k}+d \preceq  \sum _{\mathbf i} \al _{\mathbf i'} b_{i_1}' \otimes \cdots \otimes b_{i_k}'+d'  ,$ where
$i_1 < \dots < i_k$, $\al _{\mathbf i} , \al _{\mathbf i'}  \in
\mathcal A,$ $d,d' \in \overline{\mathfrak G ^\circ}.$ Then $\sum
_{\mathbf i} \al _{\mathbf i} b_{i_1} \otimes \cdots \otimes b_{i_k}
\preceq_\circ  \sum _{\mathbf i} \al _{\mathbf i'} b_{i_1}' \otimes
\cdots \otimes b_{i_k}'  ,$ where $i_1 < \dots < i_k$.
\end{lem}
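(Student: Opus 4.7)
The plan is to project the equation supplied by the hypothesis onto the ``ascending-basis'' part of $\mathfrak G$, which kills all $\Gdbl$-contributions, and then compare coefficients in a free $\mathcal A$-module presentation.

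By the preceding lemma $\overline{\mathfrak G^\circ} = \mathfrak G^\circ + \Gdbl$, the hypothesis furnishes some $e \in \overline{\mathfrak G^\circ}$ with
\[\sum_{\mathbf i'}\alpha'_{\mathbf i'} b'_{\mathbf i'} + d' \;=\; \sum_{\mathbf i}\alpha_{\mathbf i} b_{\mathbf i} + d + e\]
in $\mathfrak G$, and I split each of $d, d', e$ as $q + g$ with $q \in \mathfrak G^\circ$ and $g \in \Gdbl$. The key structural observation is that $\mathfrak G$ decomposes as $\mathcal A$-modules into $\mathfrak G = \mathcal M_{\mathrm{asc}} \oplus \Gdbl$, where $\mathcal M_{\mathrm{asc}}$ is the free $\mathcal A$-submodule on the generators $\{b_{\mathbf i}\} \cup \{(-)b_{\mathbf i}\}$ indexed by strictly ascending $\mathbf i$. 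This decomposition is preserved by the negation map of Theorem~\ref{ge22}, since $(-)$ maps no-repeat tensors to no-repeat tensors and $\Gdbl$ to $\Gdbl$; hence the projection $\pi : \mathfrak G \twoheadrightarrow \mathcal M_{\mathrm{asc}}$ commutes with $(-)$ and carries $\mathfrak G^\circ$ into $\mathfrak G^\circ \cap \mathcal M_{\mathrm{asc}}$.

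Applying $\pi$ to the displayed equation, the $\Gdbl$-parts of $d, d', e$ vanish, and the surviving $\mathfrak G^\circ$-parts become $\mathcal A$-linear combinations of the basic quasi-zeros $b_{\mathbf i} + (-)b_{\mathbf i}$; this yields
\[\sum_{\mathbf i'}\alpha'_{\mathbf i'} b'_{\mathbf i'} + q'_0 \;=\; \sum_{\mathbf i}\alpha_{\mathbf i} b_{\mathbf i} + q_0,\]
with $q_0, q'_0 \in \mathfrak G^\circ \cap \mathcal M_{\mathrm{asc}}$. Comparing coefficients in the free $\mathcal A$-module $\mathcal M_{\mathrm{asc}}$, the $(-)b_{\mathbf i}$-coefficients on each side are contributed only by $q_0, q'_0$, pinning those quasi-zero coefficients down; substituting back, the residual discrepancy on the $b_{\mathbf i}$-coefficient side assembles into a single element $r \in \mathfrak G^\circ$ with $\sum\alpha'_{\mathbf i'} b'_{\mathbf i'} = \sum\alpha_{\mathbf i} b_{\mathbf i} + r$, which is exactly $\sum\alpha_{\mathbf i} b_{\mathbf i} \preceq_\circ \sum\alpha'_{\mathbf i'} b'_{\mathbf i'}$.

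The main obstacle will be the final coefficient-comparison step, since $\mathcal A$ is only a \semiring0 and need not be additively cancellative, so $q'_0$ cannot simply be cancelled from the left-hand side. Extracting $\preceq_\circ$ genuinely uses the precise structure of $\mathfrak G^\circ \cap \mathcal M_{\mathrm{asc}}$ as $\mathcal A$-spanned by the quasi-zeros $b_{\mathbf i} + (-)b_{\mathbf i}$: the forced matching on the $(-)b_{\mathbf i}$-generators is exactly what freezes the quasi-zero contribution on the $b_{\mathbf i}$-side, so that the difference can be repackaged as a single element of $\mathfrak G^\circ$.
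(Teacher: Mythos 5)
Your projection onto the ascending-basis part is the same idea as the paper's terse proof (``Match components, eliminating those components in which some $b_i$ repeats''), and the decomposition $\mathfrak G=\mathcal M_{\mathrm{asc}}\oplus \Gdbl$ is the right frame. However, the step you yourself flag as the obstacle is a real gap, and the resolution you offer does not close it. After projecting, matching the $(-)b_{\mathbf i}$-coefficients only yields $\beta'_{\mathbf i}=\beta_{\mathbf i}+\gamma_{\mathbf i}$ (the quasi-zero coefficient on the left equals the sum of those on the right), and substituting back, the $b_{\mathbf i}$-coefficient equation becomes $\alpha'_{\mathbf i}+(\beta_{\mathbf i}+\gamma_{\mathbf i})=\alpha_{\mathbf i}+(\beta_{\mathbf i}+\gamma_{\mathbf i})$; since $\mathcal A$ need not be additively cancellative, this gives no way to peel $\alpha'_{\mathbf i}$ off as $\alpha_{\mathbf i}$ plus a quasi-zero coefficient. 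In fact the statement as literally written is false: over the Boolean semiring $\mathcal A=\{\zero,\one\}$ with $k=2$, take $\alpha_{(0,1)}=\zero$, $d=\zero$, $\alpha'_{(0,1)}=\one$, and $d'=b_0\wedge b_1 + b_1\wedge b_0=(b_0\wedge b_1)^\circ\in\mathfrak G^\circ\subseteq \overline{\mathfrak G^\circ}$; then $\zero\preceq b_0\wedge b_1 + d'$ via $e=d'$, yet $\zero\not\preceq_\circ b_0\wedge b_1$, because any $q\in\mathfrak G^\circ$ has equal $b_0\wedge b_1$- and $(-)(b_0\wedge b_1)$-coordinates, while $b_0\wedge b_1$ does not.

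The version of the hypothesis that actually pairs with Lemma~\ref{biG1} and makes the lemma true is $d,d'\in \Gdbl$ rather than $d,d'\in\overline{\mathfrak G^\circ}$. Under that hypothesis your projection annihilates $d$ and $d'$ outright, the projected equation is simply $\sum_{\mathbf i'} \alpha'_{\mathbf i'}b'_{\mathbf i'}=\sum_{\mathbf i} \alpha_{\mathbf i}b_{\mathbf i}+\pi(e)$ with $\pi(e)\in\mathfrak G^\circ$, and the conclusion is immediate with no coefficient-cancelling step at all. You should either strengthen the hypothesis in this way or acknowledge that the final assembly of $r\in\mathfrak G^\circ$ requires cancellation in $\mathcal A$, which is not available.
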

\begin{proof} Match components, eliminating those components in which some $b_i$ repeats or some of the $b_i$ descend.
\end{proof}

\begin{defn}
  $ \Gdbl  $ is the ideal of
$T(V) $ generated by  all elements $b_i \otimes b_i$ for all $i.$
\end{defn}


\begin{lem}\label{biG1} Any nonzero element of $T(V)$ is a sum of
terms $(\pm) \a\, b_{i_1}  \cdots \otimes b_{i_k}+d,$ where $i_1 <
\dots < i_k$, $\al \in \mathcal A,$ and $d \in \Gdbl.$
\end{lem}
\begin{proof}  We rearrange the $b_i$ appearing in the summands, noting
that any time a $b_i$ repeats, the product is in $ \Gdbl.$
\end{proof}

We use Lemma~\ref{biG2} to avoid $ \Gdbl$ in our computations.
 We will need the
following nondegeneracy result.

\begin{prop} \label{nondegen} Suppose $V = \mathcal A ^{(n)}$ and $u,u'\in \mathfrak
G(V)_k$ for $2 \le k <n.$ \begin{enumerate}\eroman
\item If $u \w v = u' \w v$ for all $v \in \mathfrak G(V)_{n-k}$,
then $u= u'$.
\item
If $u\notin {T(V)_k^\circ}$ then there is some $v \in T(V)_{n-k}$
for which $u\w v\notin {T(V)_k^\circ}$.
\end{enumerate}\end{prop}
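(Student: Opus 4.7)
The plan is to reduce both parts to direct calculations with the normal form from Lemma~\ref{biG1}. In the reduced Grassmann semialgebra of Definition~\ref{stdG} we have $b_i\wedge b_i=\zero$, so $\Gdbl$ drops out and $\overline{\mathfrak G^\circ}=\mathfrak G^\circ$. Combined with the rank statement in the preceding Remark, Lemma~\ref{biG1} lets me write each $u\in\mathfrak G(V)_k$ uniquely as
$$u \;=\; \sum_{\mathbf i}\alpha_{\mathbf i}\,b_{\mathbf i}\;+\;\sum_{\mathbf i}\beta_{\mathbf i}\,(-)b_{\mathbf i},$$
with $\mathbf i=(i_1<\cdots<i_k)$ ranging over the $\binom{n}{k}$ strictly increasing $k$-tuples, $b_{\mathbf i}:=b_{i_1}\wedge\cdots\wedge b_{i_k}$, and $\alpha_{\mathbf i},\beta_{\mathbf i}\in\mathcal A$; the same statement for $k=n$ exhibits $\mathfrak G(V)_n$ as a free rank-$2$ module with basis $\{e,(-)e\}$ for $e:=b_0\wedge\cdots\wedge b_{n-1}$.

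The key construction is a family of test vectors. For each $\mathbf i$ let $\mathbf j=\mathbf j(\mathbf i)$ be its complementary ordered $(n-k)$-tuple in $\{0,\dots,n-1\}$ and set
$$v_{\mathbf i}\;:=\;b_{j_1}\wedge\cdots\wedge b_{j_{n-k}}\;\in\;\mathfrak G(V)_{n-k}.$$
For $\mathbf i'\neq\mathbf i$ the wedge $b_{\mathbf i'}\wedge v_{\mathbf i}$ repeats some $b_p$ and hence vanishes, whereas $b_{\mathbf i}\wedge v_{\mathbf i}=(-)^{\sigma_{\mathbf i}}e$ for $\sigma_{\mathbf i}$ the sign of the shuffle reordering $(\mathbf i,\mathbf j)$ into $(0,\dots,n-1)$. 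Using the compatibility $(-)(ab)=a((-)b)$ from Definition~\ref{negmap} yields
$$u\wedge v_{\mathbf i}\;=\;\alpha_{\mathbf i}(-)^{\sigma_{\mathbf i}}e\;+\;\beta_{\mathbf i}(-)^{\sigma_{\mathbf i}+1}e,$$
so the coefficient pair $(\alpha_{\mathbf i},\beta_{\mathbf i})$ is recovered from $u\wedge v_{\mathbf i}$ via the basis $\{e,(-)e\}$.

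Part (i) is then immediate: applying the hypothesis $u\wedge v=u'\wedge v$ to each $v=v_{\mathbf i}$ forces $\alpha_{\mathbf i}=\alpha'_{\mathbf i}$ and $\beta_{\mathbf i}=\beta'_{\mathbf i}$ for every $\mathbf i$, so $u=u'$. For (ii), a short computation of $u(-)u$ (using that $(-)$ interchanges $b_{\mathbf i}$ with $(-)b_{\mathbf i}$) shows $u\in\mathfrak G(V)_k^\circ$ iff $\alpha_{\mathbf i}=\beta_{\mathbf i}$ for every $\mathbf i$, and analogously $\mathfrak G(V)_n^\circ=\{\gamma e+\gamma(-)e:\gamma\in\mathcal A\}$. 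Hence from $u\notin\overline{\mathfrak G(V)_k^\circ}$ one picks any $\mathbf i$ with $\alpha_{\mathbf i}\neq\beta_{\mathbf i}$; the coefficients of $u\wedge v_{\mathbf i}$ on $e$ and $(-)e$ are then unequal, so $u\wedge v_{\mathbf i}\notin\overline{\mathfrak G(V)_n^\circ}$, as required.

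The main obstacle is not the combinatorics above but the freeness of the normal form on which the whole argument depends: one needs the $2\binom{n}{k}$ elements $\{b_{\mathbf i},(-)b_{\mathbf i}\}_{\mathbf i}$ to be $\mathcal A$-linearly independent in $\mathfrak G(V)_k$, and in particular $e$ and $(-)e$ to be independent in $\mathfrak G(V)_n$. This is asserted by the Remark above the proposition but ultimately requires tracing the defining congruence $\Phi$ of Definition~\ref{stdG} inside $T(V)$ and verifying that it imposes no relations beyond killing tensors with a repeated index. Freeness of $V$ over $\mathcal A$ makes this plausible, but one must be careful not to invoke subtraction, since $\mathcal A$ is only a semiring.
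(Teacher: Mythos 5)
Your argument is essentially the paper's own: decompose $u$ and $u'$ via Lemma~\ref{biG1}, wedge with the complementary basis monomial $v_{\mathbf i}$ to isolate the coefficient(s) attached to $b_{\mathbf i}$, and read off equality (for (i)) or a non-quasi-zero (for (ii)). Where you diverge is in precision. The paper writes the normal form as $\sum \a_{\mathbf i} b_{i_1}\w\cdots\w b_{i_k}$ and proves (i) by the one line $u\w b_{k+1}\w\cdots\w b_n = \a_{1,\dots,k} b_1\w\cdots\w b_n$, ``with the base elements matching up''; for (ii) it says ``we may assume $\a_{1,\dots,k}\notin\overline{\mathfrak G(V)_k^\circ}$,'' which is a type error since $\a_{1,\dots,k}\in\mathcal A$, not $\mathfrak G(V)_k$. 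Your version fixes both issues by carrying the full pair $(\alpha_{\mathbf i},\beta_{\mathbf i})$ of coefficients on $b_{\mathbf i}$ and $(-)b_{\mathbf i}$, correctly identifying $\mathfrak G(V)_k^\circ$ as the set where $\alpha_{\mathbf i}=\beta_{\mathbf i}$ for all $\mathbf i$, and using the rank-$2$ basis $\{e,(-)e\}$ of $\mathfrak G(V)_n$ to conclude. This is the same mechanism made watertight, not a different route. Your final caveat is also fair: both proofs lean on the claim that $\mathfrak G(V)_k$ is free of rank $2\binom{n}{k}$ over $\mathcal A$, which the paper asserts only in an unproved Remark rather than by tracing the congruence $\Phi$; but since the paper itself treats that assertion as settled, this is a shared dependency rather than a defect of your argument.
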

\begin{proof} Using Lemma \ref{biG1}, write $u = \sum _{i_1< \dots < i_k}\a_{i_1, \dots i_k} b_{i_1} \w
\cdots \w b_{i_k}$, $u' = \sum _{i_1< \dots i_k}\a_{i_1, \dots i_k}'
b_{i_1'} \w \cdots \w b_{i_k'}$.

 (i) For any $\a_{1, \dots k} \ne \zero,$   $u \w b_{i_{k+1}}\w
\cdots \w b_{i_n} = (\pm)\a_{1, \dots k} b_1 \w \cdots \w b_n,$
which must be
 $(\pm)\a'_{1, \dots k} b_1 \w \cdots \w b_n,$ with the base elements
 matching up.

(ii) Adjusting notation, we may assume that $\a_{1, \dots k}\ne 0$.
But then $$u \w b_{k+1}\w \cdots \w b_n = \a_{1, \dots k} b_1 \w
\cdots \w b_n \notin {T(V)_n^\circ}^\sharp.$$
\end{proof}

\begin{thm}\label{gr3}  More in line with~\cite{GG},
define the \textbf{partially reduced Grassmann algebra} $\mathfrak
G(V)_{\ge 2}^\diamondsuit$ to be  $ T(V)_{\ge 2}$ modded out by the
congruence generated by $(b_i \w b_i,0) $ for all $i,$
  $\mathfrak G(V)^\sharp$ to be  $ \widehat{T(V)}$ modded out by the
congruence generated by $(b_i \w b_j,0) \cong (0, b_j \w b_i)$ for
all $i,j$, and $\widehat {\tT}^\sharp$ to be the corresponding image
of $\widehat {\tT}$.

 There is a triple $(\mathfrak G(V)^\sharp,\widehat
 {\tT}^\sharp,(-)_{\operatorname{sw}})$,
 together with an embedding of triples    $$ (\mathfrak G(V)_{\ge
2}^\diamondsuit,\tT_{\mathfrak G(V)_{\ge 2^\diamondsuit}},(-)) \to (
\mathfrak G(V)^\sharp, { \tT}^\sharp,(-)_{\operatorname{sw}})$$
given by  $c \mapsto (c,0)$.
\end{thm}

\begin{note} $\mathfrak G(V)_{\ge 2}$ is clearly a homomorphic image of $\mathfrak G(V)_{\ge
2}^\diamondsuit$, since it has more relations. (All $v\otimes v$ are
sent to $\zero$, not just the $b_i \otimes b_i$.) Although
$\mathfrak G(V)_{\ge 2}^\diamondsuit$ and $\mathfrak G(V)_{\ge 2}$
both coincide with the Grassmann algebra in the classical setting
where $V$ is a vector space over a field, they differ in the
semiring setting, since $b_i \otimes b_j + b_j \otimes b_i$ is not
$\zero$ in $\mathfrak G(V)_{\ge 2}^\sharp$.\end{note}

\subsection{Digression: Related notions}$ $

For the remainder of this section we examine algebraic notions
related to this paper, even though one can bypass them for the
proofs of Theorems~\ref{prech} and \ref{CHthm}.

\subsubsection{The case when $V$ already  has a negation map}$ $

 We have
seen that  $V$ itself need not have a negation map, for us
``almost'' to define a negation map on $T(V)$. In case $V$ does have
a negation map $(-)$\footnote{ For example $V$ could be the free
$\mathcal A$-module with a negation map, with base $\{ b_i, (-)b_i :
i \in I\}$.}, we need a slight modification. We define a negation
map on the tensor product $V \otimes  W$ by $(-)(v \otimes w) = ((-)
v) \otimes w.$ When $W$ also has a negation map $(-)$
 we
define a \textbf{negated tensor product}  $V \otimes _{(-)} W$ by
imposing the extra axiom
$$((-)v) \otimes_{(-)} w = v \otimes _{(-)} ((-)w), \qquad v\in V, w \in W.$$ (One
mods out   the tensor product  by the congruence generated by all
elements $((-)v \otimes w,\ v \otimes (-)w)$ .)

\begin{rem}\label{Grasstrip}
The appropriate  triple  is $(\mathfrak G, \tT_{\mathfrak G}, (-))$,
where $\tT _{\mathfrak G}= \{ v_1 \wedge\cdots \wedge v_t:\ v_i \in
V,\ t \in \Net \},$ the submonoid generated by $\tT$, with $(-)(v_1
\wedge\cdots \wedge v_t) = ((-) v_1) \wedge\cdots \wedge v_t.$
\end{rem}

\subsubsection{Comparison with \cite{GG}}$ $

 The  arguments of \cite{GG}, whose objective is to obtain a Grassmann algebra point of view for Pl\"{u}cker relations,
 can be adapted to this situation. We use the systemic version which
 enables us to replace the ``bend relation'' $f \sim g$ of  \cite{GG} for $f,g \in \Hom (V,\mathcal A)$
 by $f+g$ being a quasi-zero in the sense that $(f+g)(v) \in \mathcal A
 ^\circ$ for every $v\in V.$ Then the  Pl\"{u}cker relations in \cite[Proposition~4.1.2]{GG}
become the conditions that $\sum _{i \in A \setminus B} v_{A-\{i\}}
v_{B+\{i\}}$ is a quasi-zero.
 \begin{note} The flavor of the Grassmann algebra might be better preserved by
taking the negation map $(-)$  not to be the identity map, but
rather as defined here, which also could be obtained using
symmetrization. Note also that $\mathfrak G$ is commutative in
\cite[Definition~3.1.2]{GG}. So  why does
\cite[Definition~3.1.2]{GG} work? The answer is that $\mathfrak G$
is largely a book-keeping devise to keep track of sets of vectors
without repetition, and application of its theory to matroids does
not require much of multiplication other than $e_i \wedge e_i = 0$.
\cite[Proposition~3.1.4]{GG} is formal. One needs a cancelation
result parallel to \cite[Lemma~3.2.2]{GG}, and
\cite[Proposition~4.2.1]{GG} requires the ability to switch vectors
$e_i$ and $e_j$. 
\end{note}

%
%
%

\begin{rem}\label{zerosum}  Suppose that $\mathcal  A$ is ``zero sum free'' in the sense that $a_1+a_2 = \zero$ implies $a_1=a_2 = \zero$. Then the base
$\mathcal B$ of a free module $V$ is unique
 up to multiplication of invertible elements of $\mathcal  A$.
 (Otherwise some $b_i$ does not appear in the new base, and we cannot
 recover $b_i$ since we cannot zero out extraneous coefficients.\end{rem}

\subsubsection{Digression: The Grassmann envelope}$ $

\begin{rem}\label{Grasstrip1}  Just as with classical
algebra, one can use   $\mathfrak G $ to study a super-semialgebra
$\mathcal A = \mathcal A_0 \oplus \mathcal A_1$ by defining its
\textbf{ Grassmann envelope} $\mathcal A_0 \otimes \mathfrak G _0 +
\mathcal A_1 \otimes \mathfrak G_1 \subset \mathcal A \otimes
\mathfrak G .$ Following Zelmanov, we say that a super-semialgebra
$\mathcal A$ is \textbf{super-P} if its Grassmann envelope is P. For
example, $\mathcal A$ is super-commutative if its Grassmann envelope
is commutative. In particular, $\mathfrak G $ itself is
super-commutative.

\end{rem}

Then one can study linear algebra over super-commutative
super-semialgebras, super-anticommutative super-semialgebras, and so
forth, as indicated in \cite[\S 8.2.2]{Row16}.

   \section{Hasse-Schmidt Derivations on Grassmann Semi-Algebras}\label{basics}

Having set out the general framework, let us turn to the situation
at hand.  We review our set-up, in the special case of power series
over endomorphisms of the Grassmann algebra.
 As before, $V_n:=\Acal^{(n)}$   is the free module   with basis $\bfb:=\{b_0,\ldots,b_{n-1}\}$.
   (We start our subscripts with~0 in consonance with the notation for
   projective space.)
   Let $T_0 (V_n) = \Acal$, and $T_k(V_n):=V_n\otimes V_n\otimes \dots \otimes V_n$ be its $k$ tensor power. Define a negation $(-):T_2(V_n)\sra T_2(V_n)$
   by mapping $u\otimes v$ to $v\otimes u$.
  In particular $(-)(u\otimes u)=u\otimes u$.  We extend this to  $(-):T_k(V_n)\sra
   T_k(V_n)$ by means of~Theorem~\ref{ge22} and Lemma~\ref{wd}. Let
  \[
  T_{\geq 2}(V_n)= \{\zero\} \cup \, \bigoplus_{k\geq 2}T_k(V_n),
  \]
 a \semiringg0 with multiplication  given by  tensoring.
   We  consider two variants of the Grassmann algebra:
 \begin{enumerate}
\item $\mathfrak
G(V)_{\ge 2}^\sharp$, modding
  out by   the congruence~$\mathcal I$ of $T_{\geq 2}(V_n)$ generated by
   all $\{(b_i\otimes b_i,0): 0 \le i <n\},$

  \item $\mathfrak
G(V)_{\ge 2}$, modding
  out by   the congruence~$\mathcal I$ of $T_{\geq 2}(V_n)$ generated by
   all $\{(u\otimes u,0): u\in V_n\}.$
\end{enumerate}

%
In either case we will work with a graded Grassmann semialgebra
$\mathfrak G$, which now we denote as

\[
{\wV}=\bigoplus_{r\geq 0}\wrV, \qquad \text{where}
\]

\[
\bw^0V_n=\Acal,\qquad \bw^1V_n=V_n,\qquad
\mathrm{and}\quad\wrV:=\frac{T_r(V_n)}{ \mathcal I\cap
T_r(V_n)}\quad for \,\, r\geq 2.
\]

Thus $u\w v$ denotes the image of $u\otimes v$ through the natural
map $T(V_n)\sra \wV$. 
Here $\preceq$ is $\preceq_\circ.$

\begin{rem}\label{Grassneg} By Theorem~\ref{ge22}, each submodule $\bw^rV_n$, $r\geq 2,$
inherits a negation map by putting
$$
(-)(u_1\w u_2\cdots\w u_r)=u_2\w u_1\w \cdots\w u_r.
$$
\end{rem}

\begin{rem}\label{Grasstrip2}$ $
\begin{enumerate}\eroman
\item For each $r\geq 2$,  $\wrV$ is spanned by words $b_{i_0}\w
b_{i_1}\w \cdots\w b_{i_{r-1}}$ of length $r$. 
 In particular, $\wrV$ is a
free $\Acal$ module spanned by $\wb^r_\blamb$, where

\[ \blamb:=(\lambda_1\geq \cdots\geq \lambda_r), \qquad
\wb^r_\blamb:=b_{\lambda_r}\w b_{1+\lambda_{r-1}}\w\cdots\w
b_{r-1+\lambda_1}.
\]
\end{enumerate}
\end{rem}

 We are interested in the $\Net$-graded power series semiring
$(\wV)[[z]]: = \oplus _{r \ge 0} \wV z^r $ of Definition~\ref{psr}
(and later its super-version), and its endomorphisms.

Since the congruences are homogeneous, we define \be
{\bigwedge}^{\ge 1}V_n:=\bigoplus_{r\geq 1}\bw^rV_n, \qquad
\qquad{\bigwedge}^{\ge 2}V_n:=\bigoplus_{r\geq 2}\bw^rV_n, \qquad
\mathrm{and} \qquad{\bigwedge}^{\ne 1}V_n:=\bigoplus_{r\ne 1
}\bw^rV_n \ee

\begin{definition}
Let  $\Dz :=\sum_{i\geq 0}D_iz^i\in \mathrm{ End}(\wV)[[z]]$ be
homogeneous of degree $0$ (i.e. $D_i(\wrV)\subseteq \wrV$ and in
particular $D_i(V_n) \subseteq V_n)$ ). If \be \Dz (u\w v)=\Dz  u\w
\Dz  v\label{eq:HSderG} \ee we say that it is a {\em Hasse-Schmidt
(HS) derivation} on $\wV$.
\end{definition}

To simplify notation let us simply denote the identity map on $V_n$
as ``$\one_V$,'' also identified with $D{\zparen}$ where $D_0 =
\one$ and all other $D_i  = \zero.$

Equation (\ref{eq:HSderG}) is equivalent to:

\be \label{HS1} D_k(u\w v)= \sum_{i+j=k}D_iu\w D_jv,\quad \forall
k\geq 0, \quad \forall u,v\in \wV.\ee

For $r\geq 2$, any element of $\wrV$ is a linear combination of
monomials $v_1\w\cdots\w v_r$ of length $r$. The definition shows
that $\Dz $ is uniquely determined by the values it takes on
elements of  $V$.

In the following we shall restrict to a special class of $HS$
derivations, useful for the applications.
\begin{proposition} \label{indder} {\em For any $f\in End_{\Acal}(V_n)$, there exists a unique $HS$-derivation $\Dfz$ on $\wV$ such that}
$\Dfz_{|_{V_n}}=\sum_{i\geq 0}f^iz^i.$
\end{proposition}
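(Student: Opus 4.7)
The plan is to prove uniqueness from the HS relation first, then exhibit $\Dfz$ explicitly on the standard monomial basis and verify the HS property, using the surpassing relation $\preceq$ in place of strict equality where needed (as is customary in the $\tT$-triple setting of \cite{Row16}, cf.~Remark~\ref{trans}).

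Uniqueness is immediate from iterating \eqref{HS1}: if $\Dz$ is any HS derivation on $\wV$ extending $\sum f^iz^i$ on $V_n$, then for any wedge monomial $v_1\w\cdots\w v_r$ with $v_j\in V_n$,
\[
D_k(v_1\w\cdots\w v_r)=\sum_{k_1+\cdots+k_r=k}f^{k_1}(v_1)\w\cdots\w f^{k_r}(v_r).
\]
Since such monomials $\Acal$-span $\wV$ (as $V_n$ generates $\wV$ as a semialgebra), $\Dz$ is completely determined by $f$.

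For existence I will define $\Dfz$ on each $\Acal$-basis element of $\wrV$ (Remark~\ref{Grasstrip2}) by the same explicit formula---with the $v_j$ replaced by the appropriate $b_{i_j}$---and then extend $\Acal$-linearly. The result is a well-defined, homogeneous degree-$0$ element of $\End(\wV)[[z]]$ with $D^f_0=\one$ and restriction to $V_n$ given by $\sum f^iz^i$. By linearity, the HS relation \eqref{HS1} need only be checked on pairs of basis monomials $u=b_{i_1}\w\cdots\w b_{i_r}$, $v=b_{j_1}\w\cdots\w b_{j_s}$. The base-case identity
\[
\Dfz(b_p)\w \Dfz(b_q)=(-)\,\Dfz(b_q)\w \Dfz(b_p)
\]
is read off from the defining formula on $V_n$ together with $b_p\w b_q=(-)b_q\w b_p$; the wedge factors in $\Dfz(u)\w \Dfz(v)$ may therefore be sorted by the permutation $\sigma$ merging $I=(i_1,\dots,i_r)$ and $J=(j_1,\dots,j_s)$ into ascending order, and when $I\cap J=\emptyset$ this matches $\Dfz(u\w v)$ on the nose.

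The main obstacle is the case $I\cap J\ne\emptyset$: then $u\w v=\zero$ (by the reduction $b_m\w b_m=\zero$), but after sorting, $\Dfz(u)\w \Dfz(v)$ contains an inner factor $\Dfz(b_m)\w \Dfz(b_m)$ which is only a quasi-zero---each coefficient of $z^k$ lies in $\overline{\mathfrak G^\circ}$, exactly as $v\w v$ fails to be literally $\zero$ for general $v\in V_n$ (Proposition~\ref{varex22}). Thus the HS equality must be interpreted in the $\preceq$-sense guaranteed by Lemma~\ref{biG2}---the proper semialgebra-theoretic replacement for equality. With this reading, the explicit $\Dfz$ above is the unique HS derivation extending $\sum f^iz^i$.
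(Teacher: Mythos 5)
Your proof follows the same route as the paper's: define $\Dfz$ on ordered basis monomials by the explicit formula $\Dfz(b_{i_1}\wedge\cdots\wedge b_{i_j})=f(z)b_{i_1}\wedge\cdots\wedge f(z)b_{i_j}$, extend $\Acal$-linearly, and get uniqueness from the iterated Leibniz rule. The paper's proof stops there and never actually checks that the resulting map satisfies \eqref{eq:HSderG}; your proof does check, and that check is where you catch something real.

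Your observation about the case $I\cap J\ne\emptyset$ is correct and it applies already for $u,v\in V_n$: if $u=v=b_m$ then $u\w v=\zero$ and hence $\Dfz(u\w v)=\zero$, whereas $\Dfz(u)\w\Dfz(v)=\Dfz(b_m)\w\Dfz(b_m)$ has $z^k$-coefficient $\sum_{i+j=k}f^i(b_m)\w f^j(b_m)$, a quasi-zero in $\overline{\mathfrak G^\circ}$ that is not literally $\zero$ (e.g. $f(b_m)\w b_m+b_m\w f(b_m)$ at $k=1$). So the strict equality in the definition of HS derivation fails for the constructed $\Dfz$, and the map only satisfies $\Dfz(u\w v)\preceq\Dfz(u)\w\Dfz(v)$. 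Your proposed fix — reading the multiplicativity axiom in the $\preceq$-sense, as one does systematically for $\tT$-triples (Remark~\ref{trans}) and as the paper's worked example $D^f_2(b_1\w b_2)\succeq b_1\w b_4$ already tacitly does — is the correct one, and every downstream use of \eqref{eq:HSderG} (Lemma~\ref{Dform}(ii), Proposition~\ref{prech1}, Theorem~\ref{prech}) concludes only a surpassing relation, so nothing downstream is damaged by the weakening. In short: same construction as the paper, but you verified the axiom carefully and correctly identified that the equality must be weakened to $\preceq$, a point the paper's one-line proof silently elides.
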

\proof For the chosen $\Acal$-basis of  the module $V$  we
necessarily have $\Dfz(b_j)=\sum_{i\geq 0}f^i(b_j)z^i$. Write $f(z)$
for $\sum_{i\geq 0}f^iz^i.$ One defines $\Dfz$ on  $\wM$ by setting
for each degree: \be \Dfz(b_{i_1}\w\cdots\w
b_{i_j})=f(z)b_{i_1}\w\cdots\w f(z)b_{i_j},\ \ 1\leq j\leq r. \ee If
$D$ were another derivation satisfying the same initial condition,
it would coincide on all the basis elements of $\wV$, which generate
all elements of $\wV$. \qed
\begin{example}
Let us compute $D^f_2(b_1\w b_2)$ where $f(b_i)=b_{i+1}$. Then
\[\begin{aligned}
D^f_2(b_1\w b_2) & =D^f_2(b_1)\w b_2+D^f_1b_1\w D^f_1b_2+b_1\w
D^f_2b_2\\&  =f^2(b_1)\w b_2+f(b_1)\w f(b_2)+b_1\w f^2(b_2)
\\&=b_3 \w b_2+b_2\w b_3+b_1\w b_4\succeq b_1\w b_4,
\end{aligned} \]
since $b_3 \w b_2+b_2\w b_3$ is a quasi-zero.
\end{example}

From now on we shall fix the endomorphism $f$  once and for all, and
write $\Dz :=D^f(z)$ and $D:={D_1}_{|V}:=f$. Also we write $D_iv$
for $D_i(v)$ and $\Dz v$ for $\sum D_i v\cdot z^i$. In particular, for
each $v\in V_n$ the equality $D_iv=D_1^iv=f^i(v)$ holds.

\begin{lem}\label{Dform} For $u,v \in V_n$, \begin{enumerate}\label{distr31}\eroman
  \item $\Dz v = v+  \Dz (D_1 v) z$.
   \item $\Dz (u\w v) = u\w \Dz v +z \Dz  (D_1 u \w v).$

\end{enumerate}
 \end{lem}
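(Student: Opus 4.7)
The plan is to unfold each side using the definition of a Hasse–Schmidt derivation and the fact that on $V_n$ we have $D_i v = f^i(v) = D_1^i v$.

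For part (i), I would start by writing out $\Dz v = \sum_{i\ge 0} D_i v\cdot z^i = v + \sum_{i\ge 1} D_i v \cdot z^i$. Since $v\in V_n$, Proposition~\ref{indder} gives $D_i v = f^i(v) = f^{i-1}(f(v)) = D_{i-1}(D_1 v)$ for all $i\ge 1$. Re-indexing $j=i-1$ and pulling out one factor of $z$,
\[
\sum_{i\ge 1} D_i v\cdot z^i = z\sum_{j\ge 0} D_j(D_1 v)\, z^j = z\,\Dz(D_1 v),
\]
which is exactly (i). This step is essentially formal and uses only the $V_n$-level identity $D_i = D_1^i$.

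For part (ii), I would apply the Hasse–Schmidt property \eqref{eq:HSderG} to obtain
\[
\Dz(u\w v) = \Dz u \w \Dz v.
\]
Then I substitute (i) in the first factor: $\Dz u = u + z\,\Dz(D_1 u)$. Distributing $\w$ over $+$,
\[
\Dz u \w \Dz v = u\w \Dz v + z\,\Dz(D_1 u)\w \Dz v.
\]
Now I use the HS property once more in reverse, noting that $D_1 u\in V_n$, so $\Dz(D_1 u)\w \Dz v = \Dz(D_1 u \w v)$. Combining these steps yields
\[
\Dz(u\w v) = u\w \Dz v + z\,\Dz(D_1 u \w v),
\]
which is (ii).

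The only potential subtlety I anticipate is the role of the negation map and the surpassing relation: one might worry that commuting the scalar $z$ past a wedge or applying the HS property a second time requires only $\succeq$ rather than equality. However, $z$ is a central indeterminate by Definition~\ref{psr}, and the Hasse–Schmidt axiom \eqref{eq:HSderG} is stated as an honest equality (not merely $\succeq$) on $\wV$. So both identities hold on the nose, with no need to invoke $\preceq_\circ$. The ``hard part,'' such as it is, is simply keeping track of indices in the re-indexing step of (i); once that telescopes cleanly, (ii) follows by two applications of the HS axiom sandwiching (i).
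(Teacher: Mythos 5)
Your proof is correct and follows essentially the same route as the paper's: part (i) is the re-indexing argument using $D_i = D_{i-1}\circ D_1$ on $V_n$, and part (ii) is obtained by applying the HS identity, substituting (i) into $\Dz u$, distributing, and recombining via a second application of the HS identity. Your closing remark that the lemma holds as an honest equality (not merely a surpassing relation) is a correct and helpful observation, though the paper takes it for granted.
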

\begin{proof} (i) $\Dz v = v + \sum _{i\ge 1} D_i v\, z^i = v + \sum _{i\ge 1} (D_{i-1} D_1 v
\, z^{i-1})z= v+  \Dz (D_1 v) z.$\

\bigskip

$ \label{try1} \begin{aligned} \text{(ii)}\ \Dz (u\w v) & = \Dz
u\w\Dz v = ( u+ z\Dz  D_1 u )\w\Dz  v) \\& = u\w \Dz v +z\Dz D_1 u \w \Dz v =  u\w \Dz v  +z\Dz (D_1u \w
v).\end{aligned}$
\end{proof}

\subsection{The canonical quasi-inverse of $\Dz $}\label{sec61}
\begin{defn}
 $\overline {D} \{z\} :=\sum_{i\geq 0}\ovD_iz^i\in \End(\bw^{\neq
1}V_n)[[z]]$  is a \textbf{(left) quasi-inverse} of $\Dz $ if \be
\overline {D} \{z\} \Dz u\succeq u, \quad \forall u\in \bw^{\neq
1}V_n\label{eq:qivs} .\ee
\end{defn}
Our next task consists in constructing a quasi-inverse $\overline
{D} \{z\} $ of the $HS$ derivation $\Dz $, that we will achieve
through a number of steps necessary to cope with the difficulty of
not having a natural negation map on $V_n$. This can be done in two
ways: First do it in the classical case, and then apply
the ``transfer principle'' of Remark~\ref{trans}. 
However, one gets more precise information by taking the direct
analog.

\bigskip

\noindent {\bf Construction.} Towards this purpose we first consider
the map: \be \left\{\begin{matrix}\ovD&:& V_n&\lra&\End_\Acal(\bw
V_n)\\ \\ &&u&\longmapsto&\ovD u
\end{matrix}\right.
\ee such that $\ovD u(v):=v\w D_1u$ for all $v\in V_n$. If $v\in
\bw^{\geq 2}V_n$, we may assume it is of the form $v=v_1\w v_2$ with
$v_1\in V_n$. In this case we define \be \ovD u(v)=\ovD u(v_1\w
v_2)=\ovD u(v_1)\w v_2=v_1\w D_1u\w v_2.\label{eq36:36} \ee It is
easily seen that \eqref{eq36:36} suffices to define $\ovD u$ on all
the Grassmann semi-algebras and also that the definition does not
depend on the representation of the same element $v\in \bw V_n$. In
fact any one such is a finite linear combination of tensors of the
form  $v:=v_1\w\cdots\w v_k$, and the definition of $\ovD u(v)$ does
not change if we replace the given expression of $v$ with an
equivalent one after an even permutation of the factors. It will be
useful to identify $V_n$ as a subset of $\End_\Acal (\bw V_n)$, by
viewing each of its elements  as a (wedge) multiplication operator,
under the map $v\mapsto v\w _{\underline{\phantom{w}}}$, i.e.
$v(w)=v\w w$ for all $w\in\bw V_n$.
\begin{definition}\label{def:def38}
Let $\ovD\{z\}:=1+\ovD_1z+\ovD_2z^2+\cdots+\ovD_nz^n:V_n\sra \End_\Acal(\bw V_n)[z]$ defined as follows. If $u_1\w \cdots\w u_k\in \bw^kV_n$, then
\be
\ovD\{z\}(u_1\w\cdots\w u_k)=\ovD\{z\}(u_1)\circ\cdots\circ \ovD\{z\}(u_k)
\ee
where $\circ$ is the composition in $\End_\Acal(\bw V_n)$ and
where for all $u\in V_n=\bw^1V_n$ we set
\be
(\ovD\{z\}u)(v)= u\w v+z\ovD u(v) \in \End_\Acal(\bw V_n)[z]
\ee
acting on $v\in\bw V_n$ as $\ovD\{z\}u (v)=u\w v+(\ovD u)v$.
Now we extend $\ovD\{z\}$ to a map $\bw V_n\sra \End_\Acal(\bw V_n)$ defining it on monomials of degree $k\geq 2$ via
$$
(1+\ovD_1z+\cdots+\ovD_kz^k)(u_1\w\cdots \w
u_k)=\ovD\{z\}(u_1\w\cdots\w u_k)=\ovD\{z\}(u_1)\circ\cdots\circ
\ovD\{z\}(u_k).
$$

\end{definition}

\begin{remark}\label{rem3.9}
By definition it follows that if $u\in\bw^iV_n$, then $\ovD_ju=0$
for all $j>i$.
\end{remark}

\begin{example} In this example we compute $\ovD_1(u_1\w u_2)$, $\ovD_2(u_1\w u_2)$ and $\ovD_2(u_1\w u_2\w u_3)$ to illustrate how the definition works.
By definition $\ovD_1(u_1\w u_2)$ and $\ovD_2(u_1\w u_2)$ are the coefficients of $z$ and $z^2$ in the expansion $(\ovD u_1)\circ (\ovD_1 u_2)$ that we apply to a test element $w\in \bw V_n$. By definition one may assume $w\in V_n$. One has:
\begin{eqnarray*}
&&(u_1\w u_2+ z(\ovD u_1\circ u_2+u_1\circ \ovD u_2)+z^2\ovD u_1\circ \ovD u_2)w
\\
&=& u_1\w u_2\w w+z \ovD u_1(u_2\w w)+u_1\w \ovD u_2(w)+z^2\ovD u_1(\ovD u_2(w))\\
&=&u_1\w u_2\w w+z(u_2\w D_1u_1\w w+ u_1\w (w \w D_1u_2))+z^2\ovD u_1(w\w D_1u_2)\\
&=& u_1\w u_2\w w+z(u_2\w D_1u_1+D_1u_2\w u_1)\w w+z^2(w\w D_1u_1\w D_1u_2)=\\
&=&[u_1\w u_2+(D_1u_2\w u_1+u_2\w D_1 u_1)z+(D_1u_1\w D_1u_2)z^2]\w
w.
\end{eqnarray*}

We have obtained:
$$
\ovD_1(u_1\w u_2)= (D_1u_2\w u_1+u_2\w
D_1u_1)\w_{\underline{\phantom{w}}} \,\,\,:\bw V_n\sra \bw V_n,
$$
$$
\ovD_2(u_1\w u_2)=(D_1u_1\w
D_1u_2)\w_{\underline{\phantom{w}}}\,\,\,: \bw V_n\sra \bw V_n.
$$
Similarly we can find that
$$
\ovD_2(u_1\w u_2\w u_3)=\ovD_2(u_1\w u_2)\circ u_3+\ovD_1(u_1\w u_2)\circ \ovD_1 u_3,
$$
i.e., more explicitly
$$
\ovD_2(u_1\w u_2\w u_3)\w w= D_1u_1\w D_1u_2\w u_3\w w+(D_1u_2\w u_1+u_2\w D_1u_1)\w w \w D_1u_3
$$
from which
\begin{eqnarray*}
\ovD_2(u_1\w u_2\w u_3)\w w&=&(D_1u_1\w D_1u_2\w u_3+D_1u_2\w
D_1u_3\w u_1+ u_2\w D_1u_3\w D_1u_1)\w w\cr &=&(D_1u_1\w D_1u_2\w
u_3+u_1\w D_1u_2\w D_1u_3+D_1u_1\w u_2\w D_1u_3)\w w.
\end{eqnarray*}
We could say that the operators $\ovD_1(u_1\w u_2)$, $\ovD_2(u_1\w
u_2)$, and $\ovD_2(u_1\w u_2\w u_3)$ are ``represented''
respectively by the following elements of $\bw V_n$:
$$
D_1u_2\w u_1+u_2\w D_1u_1=D_1(u_2\w u_1),\quad \qquad D_1u_1\w
D_1u_2,
$$
and
$$
D_1u_1\w D_1u_2\w u_3+u_1\w D_1u_2\w D_1u_3+D_1u_1\w u_2\w D_1u_3.
$$
\end{example}

\begin{remark}
Let us check that
 $D_1u\w v+(\ovD_1u)(v)\succeq 0,$
 which is the sense we want to give to the expression $D_1+\ovD_1\succeq 0$.
 For all $u,v\in \bw V_n$. If $u\in V_n$ and $v=v_1\w v_2$ with $v_1\in V_n$:

\begin{eqnarray*}
D_1 u\w v+(\ovD_1 u)(v)&=&(D_1 u\w v_1)\w v_2+(\ovD u)(v_1)\w v_2\\
&=&D_1u\w v_1\w v_2+v_1\w D_1u\w v_2\\
&=& (D_1 u\w v_1+v_1\w D_1u)\w v_2\succeq 0.
\end{eqnarray*}
\end{remark}
More  generally we have the following crucial:
\begin{theorem}\label{prech1} The  polynomial $\ovDz$ is a quasi-inverse of $\Dz$, in the sense that for all $u\in \bw V_n$
\be
\ovDz  \Dz u\succeq u.
\label{eq:invovdd}
\ee
\end{theorem}
\proof We first check that the property holds for all $u\in V_n$. Then, for all $w\in \bw V_n$
\begin{eqnarray*}
(\ovDz \Dz u)(w)&=&(\Dz u+z\ovD\Dz u)(w)\\
&=&\Dz u\w w+zw\w D_1\Dz u\\
&=&(u+zD_1\Dz u)\w w+z\cdot w\w D_1\Dz u\\
&=&u\w w+z\big(D_1\Dz u\w w+w\w D_1\Dz u\big)\succeq u\w w
\end{eqnarray*}
for all $w\in \bw V_n$. Thus we have proved that $\ovDz\Dz u \w
\underline{\phantom{w}}\, \,\succeq \,  u\w
\underline{\phantom{w}}$, and the property is checked for all $u\in
\bw^1V_n$. Now,  we argue by induction, by supposing the property
holds true for all $u\in \bw^{\leq k-1}V_n$. Let us prove it for all
$u\in \bw^kV_n$. In this case we can assume $u$ of the form $u_1\w
v$, with $u_1\in V_n$. Then
$$
\ovDz(\Dz(u_1\w v))=\ovDz(\Dz u_1)\circ\ovDz(\ovDz v)\succeq (u_1\w
v)\w \underline{\phantom{w}}
$$
having used induction and the first step. \qed

\begin{cexample} Quite surprisingly, while $\ovDz$ is a quasi-inverse of $\Dz$, the
reverse is not true. For instance, for all $u,v\in V_n$ one has: $
\Dz(\ovDz u)(v)\succeq u\w \Dz v=u+\Dz D_1u. $ Let us check it,
recalling that if $u\in V_n$ then $\Dz u=u+\sum_{i\geq 0}D_1^iu\cdot
z^i$.
\begin{center}
\begin{tabular}{rllrr}
$\Dz(\ovDz u)(v)$&$=$&$\Dz (u\w v+ z\cdot v\w
D_1u)$&&{\em(}definition \eqref{eq36:36} of $(\Dz u)(v)${\em)}\\
&$=$&$\Dz u\w \Dz v+z\Dz v\w\Dz D_1u$&&{\em(}$\Dz$ is a HS
derivation)\\ &$=$&$u\w \Dz v+z\Dz D_1 u\w \Dz v$\cr &$+$&$z\Dz v\w
\Dz D_1u$&&(using  $\Dz u =u+\Dz D_1u)$\\ &$\succeq$&$u\w \Dz v.$
\end{tabular}
\end{center}
As an additional check, notice that if  $\Dz$ were a quasi-inverse
of $\ovDz$, then
$$
(1+D_1z+D_2z^2+\cdot)(1+\ovD_1 +\ovD_2z^2+\cdots)\succeq 0
$$
In particular, considering the coefficient of $z^2$ in both sides,
the following surpassing relation should hold: \be
D_2+D_1\ovD_1+\ovD_2\succeq 0.\label{eq:counsur} \ee But
\eqref{eq:counsur}  already fails for  $u\in V_n=\bw^1V_n$. Indeed,
for all $v\in V_n$, and noting that $\ovD_2u=0$, by~
Remark~\ref{rem3.9}:
\begin{center}
\begin{tabular}{rllrr}
$(D_2u+D_1\ovD_1u+\ovD_2u)(v)$&$=$&$D_2u\w v+D_1(v\w
D_1u)$&&{\em(}applying to a test vector $v${\em)}\\ &$=$&$D_2u\w
v+D_1v\w D_1u+v\w D_2u$&&{\em(}Leibniz rule enjoyed by $D_1$)\\
&$=$&$D_1v\w D_1u+ D_2u\w v+v\w D_2u$\\ &$\succeq$&$D_1v\w D_1u$
\end{tabular}
\end{center}

\end{cexample}

\begin{theorem}\label{prech}
$ \overline {D} \{z\} (\Dz  u\w v)\succeq u\w \overline {D} \{z\} v,
\qquad \forall u,v\in\bw V_n. \label{eq:prech} $
\end{theorem}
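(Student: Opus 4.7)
The plan is to prove the claim by induction on the length $m$ of $u$ when written as a decomposable monomial $u = u_1 \w \cdots \w u_m$ with $u_j \in V_n$; by $\Acal$-linearity and Remark~\ref{Grasstrip2}(i), this reduction suffices. The base case $m=1$ (i.e.\ $u \in V_n$) carries all the substance, and the inductive step then folds in via the HS-derivation property $\Dz(u_1 \w u') = \Dz u_1 \w \Dz u'$ combined with the inductive assumption.

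For the base case, take $u \in V_n$ and expand $\Dz u \w v = \sum_{j\ge 0}(D_j u \w v)\,z^j$ termwise. Since each $D_j u \in V_n$, Definition~\ref{vertex} together with Definition~\ref{defovduall} gives
\[
\overline{D}\{z\}(D_j u \w v) \;=\; D_j u \w \overline{D}\{z\} v \;+\; (-)\bigl(D_{j+1} u \w \overline{D}\{z\} v\bigr)\, z,
\]
where $(-)$ denotes the negation of Remark~\ref{Grassneg} swapping the first two wedge factors. Multiplying by $z^j$, summing over $j$, and re-indexing $k = j+1$ on the second contribution yields
\[
\overline{D}\{z\}(\Dz u \w v) \;=\; u \w \overline{D}\{z\} v \;+\; \sum_{k\ge 1} \Bigl(D_k u \w \overline{D}\{z\} v \;+\; (-)(D_k u \w \overline{D}\{z\} v)\Bigr) z^k .
\]
Each bracketed expression is a quasi-zero $\bigl(D_k u \w \overline{D}\{z\} v\bigr)^\circ$, so the whole second sum lies in $\mathfrak G^\circ[[z]]$, giving $\overline{D}\{z\}(\Dz u \w v) \succeq u \w \overline{D}\{z\} v$.

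For the inductive step, write $u = u_1 \w u'$ with $u_1 \in V_n$ and $u' \in \bw^{m-1}V_n$. The HS-derivation property of $\Dz$ gives
\[
\overline{D}\{z\}(\Dz u \w v) \;=\; \overline{D}\{z\}\bigl(\Dz u_1 \w (\Dz u' \w v)\bigr) .
\]
Applying the base case with $u_1$ in the role of the degree-$1$ element and $\Dz u' \w v$ in the role of $v$ (extended linearly in $z$), this surpasses $u_1 \w \overline{D}\{z\}(\Dz u' \w v)$. The inductive hypothesis applied to $u'$ gives $\overline{D}\{z\}(\Dz u' \w v) \succeq u' \w \overline{D}\{z\} v$; wedging on the left by $u_1$ preserves $\succeq$ (since $\overline{\mathfrak G^\circ}$ is an ideal), and associativity of $\w$ produces the desired $\overline{D}\{z\}(\Dz u \w v) \succeq u \w \overline{D}\{z\} v$.

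The main obstacle is the base-case identity itself. One must extract the formula $\overline{D}\{z\}(w \w X) = w \w \overline{D}\{z\} X + (-)\bigl(D_1 w \w \overline{D}\{z\} X\bigr) z$ for $w \in V_n$ and $X \in \bw^{\ge 2}V_n$ from the interlocking Definitions~\ref{vertex} and~\ref{defovduall}, and then track the index shift $j \mapsto j+1$ so that the ``extra'' $z$-terms at each order combine pairwise into quasi-zeros via the swap negation of Remark~\ref{Grassneg}. The remaining steps are routine bookkeeping.
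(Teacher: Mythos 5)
Your overall strategy---induction on the length of a decomposable $u$, with the base case done by a termwise $z$-expansion that produces pairwise quasi-zeros---is genuinely different from the paper's. The paper splits into the cases $u\in\bw^{\ge 2}V_n$ (where it invokes Proposition~\ref{prech1}, i.e.\ $\overline{D}\{z\}\Dz u\succeq u$) and $u\in V_n$ (where it reduces to $v\in V_n$ and does an explicit coefficient computation using Lemma~\ref{defovduv}). Your inductive step is sound given the base case, and your base-case bookkeeping, which re-indexes $j\mapsto j+1$ so that at each order in $z$ the two contributions pair into a quasi-zero, is cleaner than the paper's ad hoc expansion.

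There is, however, a genuine gap in the base case. Your key identity
\[
\overline{D}\{z\}(w\w X)=w\w\overline{D}\{z\}X+(-)\bigl(D_1w\w\overline{D}\{z\}X\bigr)z
\]
only makes sense for $X\in\bw^{\ge 2}V_n$, as you yourself note at the end: $\overline{D}\{z\}$ lives in $\End(\bw^{\ne 1}V_n)[[z]]$, so $\overline{D}\{z\}X$ is undefined for $X\in V_n$. But in the base case you apply it with $X=v$, and the theorem (and its use in Theorem~\ref{CHthm}, where $v$ ranges over $\bw^iV_n$ for $i\ge 1$) does need $v$ of degree~$1$. For $v\in V_n$ the correct expansion is Lemma~\ref{defovduv}, namely $\overline{D}\{z\}(w\w v)=w\w v+D_1(v\w w)z+(D_1w\w D_1v)z^2$, which carries a $z^2$ term your formula does not produce under the naive reading $\overline{D}\{z\}v=v$. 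The only way to rescue the statement for $v\in V_n$ is to read ``$w\w\overline{D}\{z\}v$'' operationally as $\ovD^{\,(\underline{\phantom{a}}\,\w v)}(w)=(w\w v)(-)(w\w D_1v)z$ from Definition~\ref{vertex}---then one can check it does reproduce Lemma~\ref{defovduv} and the pairwise quasi-zeros still appear---but you never spell this out, and without it the base case silently excludes exactly the degree-$1$ case the paper handles with its direct verification. To close the gap, treat $v\in V_n$ separately via Lemma~\ref{defovduv} (or state and justify the $\ovD^{\,(\underline{\phantom{a}}\,\w v)}$ interpretation explicitly).
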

\proof Suppose $u,v$ are homogeneous, say $u\in \bw^kV_n$ and $v\in
\bw^\ell V_n$. Then, by Definition \ref{def:def38} of $\ovDz$,
$$
\ovDz(\Dz u\w v)=\ovDz(\Dz u)\circ\ovDz v
$$
By   Theorem \ref{prech1}, $ \ovDz(\Dz u\w v)(w)\succeq u\circ(\ovDz
v)(w)=u\w (\ovDz v)(w) ,$ because $u$ acts  as an endomorphism on
vectors of $\bw V_n$  as $u\w \underline{\phantom{w}}$, for all
$w\in\bw V_n$. \qed

\subsection{The Cayley-Hamilton formulas for semialgebras}\label{eip}$ $


Formally define $\zeta = b_0\w b_1\w\cdots\w b_{n-1}$ and $ \zeta' =
b_1\w b_0\w\cdots\w b_{n-1}.$ Thus $\zeta' = (-)\zeta,$ and
\be\label{eqa} \ovD_i\zeta = e_i\zeta + e_i'\zeta', \quad
e_i,e_i'\in \Acal. \ee

In other words, $(e_i,e_i')$ could be called the  \textbf{eigenvalue
pair} of $\ovD_i$ restricted to $\bw^nV_n$ (where in some sense
$e_i'$ is the negated part). Let $E_n(z)$ be the eigenvalue
polynomial of $\overline {D} \{z\} $, i.e.
\[
E_{n}(z)\zeta:=\overline {D} \{z\} \zeta + \overline {D} \{z\}
\zeta'=(1+e_1z+\cdots+e_nz^n)\zeta + (1+e'_1z+\cdots+e'_nz^n)\zeta'.
\]
In particular if one sets $D_i\zeta=h_i\zeta + h_i'\zeta'$, the
relations $\overline {D} \{z\} \Dz \zeta\succeq \zeta$ and
$\overline {D} \{z\} \Dz \zeta'\succeq \zeta'$ yield  the relation
\be \label{eq:precheh} (h_n+e_1h_{n-1}+\cdots+e_n) +
(h'_n+e'_1h'_{n-1}+\cdots+e'_n) \succeq \zero. \ee
\begin{theorem}\label{CHthm}
 The Cayley-Hamilton formulas  \be \label{eq:CHform}
\left((D_nu+e_1D_{n-1}u+\cdots+e_{n}u)\w v\right)
(-)\left((D_nu+e'_1D_{n-1}u+\cdots+e'_{n}u)\w v \right) \succeq
\zero
 \ee hold for all $u\in \bw^{>0}V_n$, i.e., the left side is a
quasi-zero.
\end{theorem}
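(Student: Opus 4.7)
The strategy is to extract the $z^n$-coefficient from the surpassing relation of Theorem~\ref{prech}, applied to $u\in\bw^r V_n$ (with $r\ge 1$) and $v\in\bw^{n-r}V_n$, so that $u\w v$ and $\Dz u\w v$ both lie in $\bw^n V_n[[z]]$. On that top component the eigenvalue-pair decomposition $\ovD_i\zeta=e_i\zeta+e_i'\zeta'$, together with $\ovD_i\zeta'=e_i\zeta'+e_i'\zeta$ coming from Remark~\ref{Grassneg}, turns $\overline{D}\{z\}$ into multiplication by the scalar polynomials $E(z),E'(z)\in\Acal[z]$, while a degree bound forces the right-hand side of Theorem~\ref{prech} to vanish in degree $z^n$. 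The payoff is that the resulting $z^n$-coefficient identity is exactly the Cayley-Hamilton expression.

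Concretely, write each $D_ju\w v=q_j\zeta+q_j'\zeta'\in\bw^n V_n$, with $q_j,q_j'\in\Acal$. By $\Acal$-linearity of each $\ovD_i$,
\[\overline{D}\{z\}(D_ju\w v)=(q_jE(z)+q_j'E'(z))\zeta+(q_jE'(z)+q_j'E(z))\zeta',\]
so, summing over $j$ with weight $z^j$ and using $\Acal[[z]]$-linearity,
\[\overline{D}\{z\}(\Dz u\w v)=(QE+Q'E')\zeta+(QE'+Q'E)\zeta',\qquad Q:=\sum q_jz^j,\ Q':=\sum q_j'z^j.\]
For the right side, Proposition~\ref{ovdu-pol}, applied recursively through Definition~\ref{defovduall} (with base case Lemma~\ref{defovduv}, and extended by $\Acal$-linearity to non-decomposables), yields that $\overline{D}\{z\}v$ is a polynomial in $z$ of degree at most $n-r$. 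Since $r\ge 1$, this is strictly less than $n$, so the coefficient of $z^n$ in $u\w\overline{D}\{z\}v$ is $\zero$.

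Extracting the $z^n$-coefficient from $\overline{D}\{z\}(\Dz u\w v)\succeq u\w\overline{D}\{z\}v$ therefore yields $[QE+Q'E']_{z^n}\zeta+[QE'+Q'E]_{z^n}\zeta'\succeq\zero$. Using $\zeta'=(-)\zeta$ and expanding $e_i(D_{n-i}u\w v)=e_iq_{n-i}\zeta+e_iq_{n-i}'\zeta'$ (and analogously with $e_i'$), a direct matching identifies this $z^n$-coefficient with
\[\Bigl(\sum_{i=0}^n e_i(D_{n-i}u\w v)\Bigr)(-)\Bigl(\sum_{i=0}^n e_i'(D_{n-i}u\w v)\Bigr),\]
which is the claimed Cayley-Hamilton quasi-zero. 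The edge case $r=n$, $v\in\Acal$, follows directly from the scalar relation (\ref{eq:precheh}) applied to $\zeta$ and $\zeta'$ and multiplied by $v$. The chief obstacle is the degree bound on $\overline{D}\{z\}v$: while Lemma~\ref{defovduv} gives it in $\bw^2V_n$, confirming that Definition~\ref{defovduall} preserves the matching (length of wedge equals $z$-degree) on $\bw^mV_n$ requires a careful induction on $m$, with particular attention to the boundary case $n-r=1$ where $\overline{D}\{z\}v$ must be interpreted via the splitting used in the proof of Theorem~\ref{prech}.
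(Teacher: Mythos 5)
Your argument follows the paper's direct proof almost exactly: extract the $z^n$-coefficient from the relation of Theorem~\ref{prech}, observe that Proposition~\ref{ovdu-pol} bounds the degree of $\overline{D}\{z\}v$ strictly below $n$ so the right side contributes nothing in degree~$n$, and identify the surviving left-hand coefficient via the eigenvalue pair $(e_i,e_i')$ of $\ovD_i$ acting on $\Acal\zeta + \Acal\zeta'$. The only difference is presentational---you package the scalar bookkeeping in the polynomials $Q,Q',E,E'$, whereas the paper writes out the sum $D_nu\w v+\ovD_1(D_{n-1}u\w v)+\cdots+\ovD_n(u\w v)$ and applies the eigenvalue-pair relation term by term.
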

\proof
 If $u=\zeta$ the theorem is true, due to (\ref{eq:precheh}). Then assume that $u\in\bw^{n-i}V_n$, for some $1\leq i\leq n-1$.
This follows from the transfer principle of Remark~\ref{trans},
since the assertion was proved (with equality) for classical
algebras in \cite{GaSc2}, and all the extra quasi-zeros appear in
the right. But we also would like to give a direct proof. For all
$v\in \bw^{i}V_n$ we have the surpassing relation~(\ref{eq:prech}).
Matching degrees yields the surpassing relation between the $n$-th
degree coefficient of the left side and the $n$-th degree
coefficient of the right side of~\eqref{eq:prech} which is:
$$
D_nu\w v+\ovD_1(D_{n-1}u\w v)+\cdots+\ovD_n(u\w v)\succeq u\w
\ovD_nv.
$$
Since $\overline {D} \{z\}  v$ is a polynomial of degree at most
$i<n$, it follows that $\ovD_kv \succeq \zero$ for all $k > i$.
 On the other hand
$\ovD_i(D_{n-i}u\w v)= e_i(D_{n-i}u\w v) (-)e_i'(D_{n-i}u\w v)$
because $(e_i, e_i')$ is the  eigenvalue pair of $\ovD_i$ against
any element of $\bw^nV_n\cong (\Acal\zeta+ \Acal\zeta')$. Thus we
have proved \eqref{eq:CHform} for all $v\in \bw V_n$.\qed

\begin{rem}\label{eq:CHform17} In the classical case where the only quasi-zero is $\{\zero
\}$, we get

\be \label{eq:CHform1} \left((D_nu+e_1D_{n-1}u+\cdots+e_{n}u)\w
v\right) (-)\left((D_nu+e'_1D_{n-1}u+\cdots+e'_{n}u)\w v \right) =
\zero.
 \ee
\end{rem}

\begin{corollary}\label{CHthm1}
$ (D_1^n+(e_1(-)e_1')D_1^{n-1}+\cdots+(e_n(-)e_n'))u\succeq \zero $
for all $u\in \bw^{>0} V_n,$ where we interpret
$(e_i(-)e_i')D_i^{n-i}(u)$ to be $e_i D_i^{n-i}u \, (-)\, e_i'
D_i^{n-i}u.$
\end{corollary}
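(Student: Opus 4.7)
The plan is to deduce the corollary directly from Theorem~\ref{CHthm} by stripping off the wedge factor $v$ and collecting quasi-zero contributions.

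Fix $u\in\bw^{n-i}V_n$ with $1\le i\le n$. By Theorem~\ref{CHthm}, for every $v\in\bw^{i}V_n$,
\[
\bigl((D_nu+e_1D_{n-1}u+\cdots+e_{n}u)\w v\bigr)\,(-)\,\bigl((D_nu+e'_1D_{n-1}u+\cdots+e'_{n}u)\w v\bigr)\succeq \zero,
\]
and bilinearity of $\w$ lets me rewrite this as
\[
\Bigl((D_nu\,(-)\,D_nu)+(e_1(-)e_1')D_{n-1}u+\cdots+(e_n(-)e_n')u\Bigr)\w v\succeq \zero .
\]

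Next, applying Proposition~\ref{nondegen}(ii) contrapositively in the valid range $2\le n-i<n$ strips the wedge factor $v$: the bracketed element itself must lie in $\overline{\mathfrak G^\circ}$, i.e.\ must be a quasi-zero. Since $D_nu\,(-)\,D_nu=(D_nu)^\circ$ is already a quasi-zero and the set of quasi-zeros is a submodule, the remainder
\[
(e_1(-)e_1')D_{n-1}u+\cdots+(e_n(-)e_n')u
\]
is also a quasi-zero. Prepending the (again quasi-zero) term $D_1^nu\,(-)\,D_1^nu$, under the convention explained in the corollary statement, reproduces the form on the left-hand side. To rewrite $D_{n-i}u$ as $D_1^{n-i}u$, I invoke Proposition~\ref{indder}, which gives $D_j=D_1^j$ identically on $V_n$, and then extend this identification to $\bw^{>0}V_n$ through the higher-derivation rule on $\Dz$.

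Two issues will require care. First, Proposition~\ref{nondegen} is stated only for wedge degree $k$ with $2\le k<n$, so the edge cases $u\in V_n$ and $u\in\bw^nV_n$ must be dispatched separately: for $u\in\bw^nV_n$ the relation is essentially the defining statement of the eigenvalue pair~\eqref{eqa} together with~\eqref{eq:precheh}, while for $u\in V_n$ one can strip $v$ by a direct coordinate computation in the basis $\{\zeta,\zeta'\}$ of $\bw^nV_n$, using Proposition~\ref{indder}. Second, identifying $D_{n-i}u$ with $D_1^{n-i}u$ for $u$ of wedge degree $\ge 2$ requires controlling Leibniz cross-terms such as $D_au_1\w D_bu_2$; these appear paired with $D_bu_2\w D_au_1$ and thus form quasi-zeros by Theorem~\ref{ge22}, so they may be absorbed into $\overline{\mathfrak G^\circ}$ by induction on the wedge degree of $u$. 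I expect this absorption step to be the main technical obstacle, since one must verify that \emph{all} discrepancies between $D_{n-i}$ and $D_1^{n-i}$ on higher wedges contribute only to $\overline{\mathfrak G^\circ}$ and none escape the $\succeq\zero$ relation.
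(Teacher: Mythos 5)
Your overall route (cite Theorem~\ref{CHthm}, strip off $v$, then convert $D_j$ to $D_1^j$) matches the paper's, but the paper's own proof is far terser: it just invokes Theorem~\ref{CHthm} together with Proposition~\ref{indder}'s identity $D_ju=D_1^ju$, without mentioning any wedge-stripping or Leibniz expansion. Your use of Proposition~\ref{nondegen}(ii) to strip $v$ in the middle range $2\le n-i<n$ is a genuine and worthwhile addition that the paper leaves implicit, and your recognition that $u\in V_n$ and $u\in\bw^nV_n$ fall outside that range is correct.

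However, there is a real gap in your final step, which you partly flag yourself but do not resolve, and the proposed remedy is wrong. You claim the Leibniz cross-terms $D_au_1\w D_bu_2$ ``appear paired with $D_bu_2\w D_au_1$ and thus form quasi-zeros.'' That pairing does not occur. In $D_k(u_1\w u_2)=\sum_{a+b=k}D_au_1\w D_bu_2$ the partner of $D_au_1\w D_bu_2$ is $D_bu_1\w D_au_2$, not $D_bu_2\w D_au_1$, and the two are not related by the negation map. Concretely, already for wedge degree $2$ one computes $D_1^2(u_1\w u_2) = D_2(u_1\w u_2) + D_1u_1\w D_1u_2$, and $D_1u_1\w D_1u_2$ is generically not in $\overline{\mathfrak G^\circ}$ (take $f(b_0)=b_1$, $f(b_1)=b_0$, $u_i=b_i$). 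So the identification $D_{n-i}u\succeq D_1^{n-i}u$ on $\bw^{\ge 2}V_n$, which your plan requires, simply fails. Note that Proposition~\ref{indder} only asserts $D_j=D_1^j$ on $V_n$, and that is precisely the scope in which the paper's one-line justification is valid; once you attempt to carry the corollary to $\bw^{\ge 2}V_n$ via Proposition~\ref{nondegen}, you need a bridge from $D_{n-i}$ to $D_1^{n-i}$ on those higher wedge components, and neither your quasi-zero pairing nor anything else in the paper supplies it. The clean outcome of your argument is the case $u\in V_n$ (where both steps work), and that is in fact all that Proposition~\ref{indder} licenses in the paper's own proof.
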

\proof By Theorem~\ref{CHthm},
$$\left((D_nu+e_1D_{n-1}u+\cdots+e_{n}u)\w v\right)
(-)\left((D_nu+e'_1D_{n-1}u+\cdots+e'_{n}u)\w v \right) \succeq
\zero.$$ But $\Dz $ is by hypothesis the unique HS-derivation on
$\wV$ associated to the endomorphism $D_1$ (see
Proposition~\ref{indder}). In particular $D_iu=D_1^iu$.\qed

\begin{note}
 When  working with $\mathfrak
G(V)_{\ge 2}$, we obtain equality in Theorem \ref{CHthm} and
Corollary~\ref{CHthm1} since the only quasi-zeros are $\zero,$ by
Theorem~\ref{impl}.
\end{note}

\end{document}